\theoremstyle{plain}
\newtheorem{Theorem}{Theorem}[section]
\theoremstyle{definition}
\theoremstyle{remark}
\newtheorem{Remark}[Theorem]{Remark}
\theoremstyle{remark}
\theoremstyle{plain}
\theoremstyle{plain}
\theoremstyle{plain}
\theoremstyle{remark}
\newtheorem{Example}{Example}[section]
\theoremstyle{remark}
\theoremstyle{remark}
\theoremstyle{remark}
\newtheorem{Assumptions}[Theorem]{Assumptions}
\newcommand\RR{\mathbb{R}}
\DeclareMathAlphabet{\pazocal}{OMS}{zplm}{m}{n}
\newcommand{\Ind}[1]{\mathbbm{1}_{\left \{#1\right \}}}
\newcommand{\BD}{\mathbf{B}}
\newcommand{\BV}{\mathbf{\Sigma}}
\newcommand{\BX}{\mathbf{X}}
\newcommand{\BL}{\mathbf{L}}
\newcommand{\BG}{\mathbf{G}}
\newcommand{\BW}{\mathbf{W}}
\begin{document}

\title{A maximum principle for a stochastic control problem with multiple random terminal times}
\author{Francesco Cordoni$^{a}$ \and Luca Di Persio$^{a}$}
\date{}
\maketitle

\renewcommand{\thefootnote}{\fnsymbol{footnote}}
\footnotetext{{\scriptsize $^{a}$ Department of Computer Science, University of Verona, Strada le Grazie, 15, Verona, 37134, Italy}}
\footnotetext{{\scriptsize E-mail addresses: francescogiuseppe.cordoni@univr.it
(Francesco Cordoni), luca.dipersio@univr.it (Luca Di Persio)}}

\begin{abstract}
In the present paper we derive, via a backward induction technique, an ad hoc maximum principle for an optimal control problem with multiple random terminal times. We thus apply the aforementioned result to the case of a linear quadratic controller, providing solutions for the optimal control in terms of Riccati backward SDE with random terminal time. 
\end{abstract}

\textbf{AMS Classification subjects:} 93E03, 93E20, 60H10, . \medskip

\textbf{Keywords or phrases: }Stochastic optimal control, multiple defaults time, maximum principle, linear--quadratic controller, financial network.

\section{Introduction}

In the last decades stochastic optimal control theory has received an increasing attention by the mathematical community, also in connection with several concrete applications, spanning from industry to finance, from biology to crowd dyamics, etc. In all of above applications a rigorous theory of stochastic optimal control (SOC), under suitable assumption on the {\it source of random noise}, revealed to be a fundamental point. 

To this aim different theoretical approaches have been developed. They can be broadly divided into two classes: partial differential equations (PDE) methods via the \textit{Hamilton-Jacobi-Bellman} (HJB) equation, and methods based on the \textit{maximum principle} via backward stochastic differential equations (BSDEs), see, e.g.,  \cite{Fle,PhaBook,Yon} 
In particular BSDEs' methods have  proved to be particularly adapted for a large set of SOC-problems, as reported, e.g., in
\cite{PhaR}.
Within previously mentioned problems a particular role is played by those SOC problems characterized by
the specification of a random terminal time. In particular, 
this a classical task in Finance at least since the  
recent financial credit crunch which imposed the need to model possible defaults and credit risks.
When dealing with optimal control with random terminal time, two main approaches are possibile. The first possible setting considers the random terminal time as a to be completely inaccessible to the reference filtration. The related classical approach 
consists in enlarging the reference filtration, see, e.g. \cite{Man}. In this way, via a suitable \textit{density assumption} on the conditional law of the random time,  the original problem is converted into a control problem with fixed terminal time, with respect to the new enlarged filtration, see, e.g., \cite{ElK,Pha} for more theoretical insights and to \cite{BJR,BR,CDP,JKP} for some concrete applications. 

A second, alternative,  approach assumes 
that the stopping times are accessible from the reference filtration, hence implying  
a {\it perfect information} about the triggered random times. The typical assumption in this setting is that the stopping time $\tau$ is defined as the first hitting time of a barrier $v$ for a reference system whose dynamic is given by a  \textit{stochastic differential equation} (SDE). 
In a credit risk setting, such an approach is known as the \textit{structural approach},  and it has a long-standing financial literature whose first results date back to \cite{Mer}. It is worth stressing that this last scenario does not fall back into previous one, where inaccessible stopping times are considered. In fact, if the stopping time is to be defined as the first hitting time, it does not satisfy above mention \textit{density hypothesis}.

The present paper investigates a SOC-problem with multiple random events of the latter type. Therefore, differently from \cite{JKP,Pha}, we will not assume random events to be totally inaccessible, but, instead, they will be defined as first hitting time, against a predetermined boundary, of the driving process.

In particular, we will consider a controlled system of $n \in \mathbb{N}$ SDEs of the general form
\begin{equation}\label{EQN:Introe}
\begin{cases}
dX^i (t) = \mu^i(t,X^i(t),\alpha^i(t))dt + \sigma^i(t,X^i(t),\alpha^i(t))dW^i(t)\, ,\quad i=1,\dots,n\, ,\\
X^i(0) = x^i\, ,
\end{cases}
\end{equation}
under standard assumptions of Lipschitz coefficients $\mu^i$ and $\sigma^i$ with at most linear growth, being $\alpha^i$ the control. The notation will be specified in detail within subsequent sections. 

We aim at minimizing the following functional up to a given stopping time $\tau$,
\[
J(x,\alpha) = \mathbb{E} \int_0^\tau L(t,X(t),\alpha(t)) dt + G(\tau,X(\tau))\, ,
\]
for some suitable functions $L$ and $G$, where we have denoted by $X(t) = \left (X^1(t),\dots,X^n(t)\right )$ and $\alpha(t) = \left (\alpha^1(t),\dots,\alpha^n(t)\right )$.

Then we assume that the system, instead of being stopped as soon as the stopping time $\tau$ is triggered, continues to evolve according to a new system of SDEs written as follows
\[
\begin{cases}
dX^i_1 (t) = \mu^i_1(t,X^i_1(t),\alpha^i_1(t))dt + \sigma^i_1(t,X^i_1(t),\alpha^i_1(t))dW^i(t)\, ,\quad i=1,\dots,n-1\, ,\\
X^i_1(\tau) = x^i_1\, ,
\end{cases}
\]
for some new coefficients $\mu_1^i$ and $\sigma_1^i$  again satisfying standard assumptions of linear growth and Lipschitz continuity. In particular, we will assume that, according to the triggered stopping time the $k-$th component in equation \eqref{EQN:Introe} has been set to $0$, according to rigorous definitions later specified. 
Then, we again aim at minimizing a functional of the form
\[
J_1(x_1,\alpha) = \mathbb{E} \int_{\tau}^{\tau_1} L_1(t,X_1(t),\alpha_1(t)) dt + G_1(\tau_1,X_1(\tau_1))\, ,
\]
with the same notation used before, $\tau_1$ being a new stopping time. 
We repeat such a scheme 
for a series of $n$ stopping times. Moreover, in complete generality, we assume that the order of the random times is not know a priori, hence forcing us to consider all possible combinations of random events with associated all the possible combinations of driving SDEs.

The main result of the present paper consists in deriving a \textit{stochastic maximum principle}, both in necessary and sufficient form, for the whole series of control problems stated above. 

Clearly, we cannot expect that the global optimal solution is given by gluing each optimal control between two consecutive stopping times. Instead, we will tackle the problem following a dynamic programming principle approach, as exploited, e.g., in \cite{Pha}. In particular,  we will solve the problem backward. Therefore, the case of all stopping times but one have been triggered is considered first, then we consider the problem with two random events left, etc.,  until the very first control problem. Following this scheme, we are able to provide the global optimal solution recursively, so that the $k-$th optimal control problem depends on the $(k+1)-$th optimal solution. We remark that altough the backward approach has been used in literature, see, e.g. \cite{JKP,Pha}, to the best of our knowledge the present work is the first one using such techniques where stopping times are defined as hitting times.

After having derived the main result, i.e. the aforementioned \textit{maximum principle}, we will consider the particular case of a linear--quadratic control problem, that is we assume the underlying dynamics to be linear in both the state variable and the control, with quadratic costs to be minimized. Such type of problems have been widely studied both from a theoretical and practical point of view since they often allow to obtain closed form solution for the optimal control. 

In particular, usually one can  write the solution to a linear--quadratic control problem in terms of the solution of a Riccati backward ordinary differential equation (ODE), hence reducing the original linear--quadratic stochastic control problem  to the solution of a simpler ODE, see, e.g., \cite{Yon} and \cite[Section 6.6]{PhaBook}, for possible financial applications. 
Let us recall that, considering either random coefficients for the driving equation or random terminal time in the control problem, the latter case being the one here treated, the backward Riccati ODE becomes a Riccati BSDE, see, e.g., \cite{GT1,GT2,KZ,KT}.

We stress that the results derived in the present paper find natural applications in many areas related to mathematical finance, and mainly related to systemic risk, where after recent credit crisis, the assumption of possibile failures has become the main ingredient in many robust financial models. Also, network models have seen an increasing mathematical attention during last years, as witnessed by the developmend of  several ad hoc techniques derived  to consider general dynamics on networks.  
We refer the interested reader to \cite{CDPN,CDPN1,DPZ},for general results on network models, and to \cite{Hur} for a financially oriented treatment. 

In particular, these models have proved to be particularly suitable if one is to consider a system of interconnected banks. Following thus the approach of \cite{Cap,Eis,Lip}, results derived in the present work can be successfully applied to a system of $n$ interconnected banks, lending and borrowing money.
As in \cite{Cap,CDPP} one can assume the presence of an external controller, typically called \textit{lender of last resort} (LOLR), who actively supervises the banks' system and possibly lending money to actors in needs. 
A standard assumption is that the LOLR lends money in order to optimize a given quadratic functional. Therefore, modelling the system as in \cite{CDPP}, we  recover a linear--quadratic setting allowing us to apply results obtained in the present work.\\

The  paper is organized as follows: in Section \ref{SEC:GS} we introduce the general setting, clarifying main assumptions; then, Section \ref{SEC:NMP} is devoted to the proof of the \textit{necessary maximum principle}, whereas in Section \ref{SEC:SMP} we will prove the \textit{sufficient maxim principle}; at last, in Section \ref{SEC:LQP1}, we apply previous results to the case of a linear--quadratic control problems also deriving the global solution by an interative scheme to solve 
a system of Riccati BSDEs.

\section{The general setting}\label{SEC:GS}

Let $n \in \mathbb{N}$ and $T<\infty$ a fixed terminal time and let us consider a standard complete filtered probability space $\left (\Omega,\mathcal{F},\left (\mathcal{F}_t\right )_{t \in [0,T]},\mathbb{P}\right )$ satisfying usual assumptions. 

In what follows we are going to consider a controlled system of $n$ SDEs, for $t \in [0,T]$ and  $ i = 1,\dots,n$, evolvong has folllows
\begin{equation}\label{EQN:SysiIniz1}
\begin{cases}
dX^{i;0} (t) &= \mu^{i;0}\left (t,X^{i;0}(t),\alpha^{i;0}(t) \right ) dt + \sigma^{i;0}\left (t,X^{i;0}(t),\alpha^{i;0}(t)\right )dW^{i}(t)\, ,\\
X^{i;0}(0)&=x^{i;0}_0\, ,
\end{cases}
\end{equation}
where $W^{i}(t)$ is a standard Brownian motion, $\alpha^{i;0}$ being the control. In particular, we assume
\[
\mathcal{A}^{i} := \left \{\alpha^{i;0} \in L^2_{ad}\left ([0,T];\RR\right ) \, : \, \alpha^{i;0}(t) \in A^{i} \,, \mbox{a.e.} \, t \in [0,T]\right \}\, ,
\]
where $A^{i} \subset \RR$ is assumed to be convex and closed, and we have denoted by $L^2_{ad}\left ([0,T];\RR \right )$ the space of $\left (\mathcal{F}_t\right )_{t \in [0,T]}$--adapted processes $\alpha$ such that
\[
\mathbb{E} \int_0^T |\alpha^{i;0}(t)|^2 dt<\infty\, ,
\]
while $A:= \otimes_{i=1}^n A^{i}$.

In what follows we will assume the following assumptions to hold.

\begin{Assumptions}\label{ASS:1}
Let $\mu:[0,T] \times \RR \times A \to \RR$ and $\sigma:[0,T]\times \RR \times A \to \RR$ be measurable functions and suppose that there exits a constant $C>0$ such that, for any $x$, $y \in \RR$, for any $a \in A$ and for any $t \in [0,T]$, it holds
\[
\begin{split}
&|\mu(t,x,a) -\mu(t,y,a)| + |\sigma(t,x,a) - \sigma(t,y,a)| \leq C |x-y|\, ,\\
&|\mu(t,x,a)| + |\sigma(t,x,a)| \leq C (1+|x|+ |a|)\, .
\end{split}
\]
\end{Assumptions}

We thus assume the coefficients $\mu^{i;0}$ and $\sigma^{i;0}$, for $i=1,\dots,n$, in equation \eqref{EQN:SysiIniz1}, satisfy assumptions \ref{ASS:1}. Thererfore, we have that there exists a unique strong solution to equation \eqref{EQN:SysiIniz1}, see, e.g., \cite{Fle,PhaBook}.

\begin{Remark}
In equation \eqref{EQN:SysiIniz1} we have considered an $\RR-$valued SDE, neverthelesse  what follows still holds if we consider a system of SDEs, each of which takes values in $\RR^{m_i}$, $m_i \in \mathbb{N}$, $i=1,\dots,n$.
\end{Remark}

Let us denote by 
\[
\begin{split}
\mathbf{X}^0(t) &= \left (X^{1;0}(t),\dots, X^{n;0}(t)\right )\, ,\\
\alpha^0(t) &=\left (\alpha^{1;0}(t),\dots,\alpha^{n;0}(t)\right )\,,
\end{split}
\]
then define the coefficients 
\[
\BD^0: [0,T] \times \RR^n \times A \to \RR^n\,,\quad \BV^0 : [0,T] \times \RR^n \times A \to \RR^{n \times n}, ,
\]
as
\[
\BD^0(t,\mathbf{X}^0(t),\alpha^0(t)) := \left (\mu^{1;0}(t,X^{1;0}(t),\alpha^{1;0}(t)), \dots, \mu^{n;0}(t,X^{n;0}(t),\alpha^{n;0}(t))\right )^T\, ,
\]
and
\[
\BV^0(t,\mathbf{X}^0(t),\alpha^0(t)) := diag [\sigma^{1;0}(t,X^{1;0}(t),\alpha^{1;0}(t)), \dots, \sigma^{n;0}(t,X^{n;0}(t),\alpha^{n;0}(t))]\, ,
\]
that is the matrix with $\sigma^{i;0}(t,x,a)$ entry on the diagonal and null off-diagonal.

Let us also denote $\mathbf{x}^0_0 = \left (x^{1;0}_0,\dots,x^{n;0}_0\right )$ and $\BW(t)= \left (W^{1}(t),\dots,W^{n}(t)\right )^T$. Hence, system \eqref{EQN:SysiIniz1} can be compactly rewritten as follows 
\begin{equation}\label{EQN:VectBan1}
\begin{cases}
d \mathbf{X}^0(t) &= \BD^0(t,\BX^0(t),\alpha^0(t)) dt + \BV^0(t,\BX^0(t),\alpha^0(t))d\BW(t) \, ,\\
\mathbf{X}^0(0) &= \mathbf{x}_0^0\, .
\end{cases}
\end{equation}

We will minimize the following functional
\begin{equation}\label{EQN:GenCP}
J(x,\alpha) = \mathbb E \int_0^{\hat{\tau}^{1}} \BL^0 \left (t,\BX^0 (t),\alpha^0(t) \right ) dt + \BG^0\left (\hat{\tau}^{1}, \mathbf{X}^0 (\hat{\tau}^{1}) \right ) \, ,\\
\end{equation}
where $\BL^0$ and $\BG^0$ are assumed to satisfy the following assumptions:

\begin{Assumptions}\label{ASS:2}
Let $\BL^{0}:[0,T] \times \RR^n \times A^0  \to \RR$ and $\BG^{0}:[0,T] \times \RR^n \to \RR$ be two measurable and continuous functions such that there exist two constants $K$, $k>0$ such that, for any $t \in [0,T]$, $x \in \RR^n$ and $a \in A^0$, it holds
\[
\begin{split}
&|\BL^0(t,x,a)| \leq K(1+ |x|^k + |a|^k)\, ,\\
&|\BG^0(t,x)| \leq K(1+|x|^k)\, .
\end{split}
\]
\end{Assumptions}

Let us underline that in the  cost functional defined by \eqref{EQN:GenCP}, the terminal time $\hat{\tau}^1$ is assumed to be triggered as soon as $\BX^0$ reaches a given boundary $v^0$. In particular, we assume the stopping boundary to be of the form
\[
v^0 = \left (v^{1;0},\dots,v^{n;0}\right )\, ,
\]
for some given constants $v^{i;0} \in \RR$, $i=1,\dots,n$. We thus denote by
\begin{equation}\label{EQN:Stop0}
\tau^{i;0} := T \wedge min \left \{ t \geq 0 \, : \, X^{i;0}(t) = v^{i;0}\, \right \}\,,\quad i=1,\dots,n ,
\end{equation}
the first time $X^{i;0}$ reaches the boundary $v^{i;0}$ and we set
\[
\hat{\tau}^{1} := \tau^{1;0} \wedge \dots \wedge \tau^{n;0}\, ,
\]
the first stopping time to happen. 

We stress that, in what follows we will denote by $\hat{\tau}$ the ordered stopping times. In particular,  $\hat{\tau}^1 \leq \dots \leq \hat{\tau}^n$, where $\hat{\tau}^k$ denotes the $k-th$ stopping time to happen. On the contrary, the notation $\tau^k$ indicates  that the stopping time has been triggered by the $k$-th node. In what follows, we will use the convention that, if $\hat{\tau}^1 = \tau^k$, then $\tau^j = T$, for $j \not = k$.

\begin{Remark}
From a practical point of view, we are considering a controller that aims at supervise $n$ different elements defininf a system, up to the first time one of the element of it exits from a given domain. From a financial perspective, each element represents a financial agent, while  the stopping time denotes its failure time. 
Hence,  a possible cost to be optimized, as we shall see in Section \ref{SEC:LQP1}, is to maximize the distance between the element/financial agent from the associated stopping/default boundary.
\end{Remark}

As briefly mentioned in the introduction, instead of stopping the overall control problem when the first stopping time is triggered, we assume that the system continues to evolve according to a (possibly) new dynamic. As to make an example, let us  consider the case of $\hat{\tau}^1 \equiv \hat{\tau}^{k;0}$, that is the first process to hit the stopping boundary is $X^{k;0}$. We thus set to $0$ the $k-$th component of $\BX^0$, then considering the new process
\[
\BX^{k}(t) = \left (X^{1;k}(t),\dots,X^{k-1;k}(t),0,X^{k+1;k}(t),\dots,X^{k;n}(t)\right )\,,
\]
with control given by
\[
\alpha^{k}(t) = \left (\alpha^{1;k}(t),\dots,\alpha^{k-1;k}(t),0,\alpha^{k+1;k}(t),\dots,\alpha^{k;n}(t)\right )\,,
\]
where the superscript $k$ denotes that the $k-$th component hit the stopping boundary and therefore has been set to 0. 

Then, we consider the $n-$dimensional system, for $t \in [\hat{\tau}^1,T]$, defined by
\[
\begin{cases}
dX^{i;k} (t) &= \mu^{i;k}\left (t,X^{i;k}(t),\alpha^{i;k}(t) \right ) dt + \sigma^{i;k}\left (t,X^{i;k}(t),\alpha^{i;k}(t)\right )dW^{i}(t)\, ,\\
X^{i;k}(\hat{\tau}^{1})&=X^{i;0}(\hat{\tau}^{1}) =: x^{i;k}\, ,\quad i = 1,\dots,k-1,k+1,\dots,n\, ,
\end{cases}
\]
where the coefficients $\mu^{i;k}$ and $\sigma^{i;k}$ satisfy assumptions \ref{ASS:1} and we have also set  $X^{k;k}(t) = 0$.

We therefore define $\BD^{k}:[\hat{\tau}^{1},T]\times \RR^{n} \times A \to \RR^{n}$ and $\BV^{k}:[\hat{\tau}^{1},T]\times \RR^{n} \times A \to \RR^{n \times n}$ as
\[
\begin{split}
\BD^k(t,\mathbf{X}^k(t),\alpha^k(t)) &:= \left (\mu^{1;k}(t,X^{1;k}(t),\alpha^{1;k}(t)), \dots, \mu^{n;k}(t,X^{n;k}(t),\alpha^{n;k}(t))\right )^T\, ,\\
\BV^k(t,\mathbf{X}^k(t),\alpha^k(t)) &:= diag [\sigma^{1;k}(t,X^{1;k}(t),\alpha^{1;k}(t)), \dots, \sigma^{n;k}(t,X^{n;k}(t),\alpha^{n;k}(t))]\, ,
\end{split}
\]
which allows us to rewrite the above system as
\begin{equation}\label{EQN:Sys2}
\begin{cases}
d \BX^{k}(t) = \BD^{k}(t,\BX^{k}(t),\alpha^k(t)) dt + \BV^{k}(t,\BX^{k}(t),\alpha^k(t))dW(t)\, , \quad t \geq \hat{\tau}^1\, ,\\
\BX^{k}(\tau^{k}) = \Phi^k(\tau^{k})\BX^0(\tau^{k}) =: \mathbf{x}^{k}\, ,
\end{cases}
\end{equation}
where $\Phi^k$ is the diagonal $n \times n$ matrix defined as
\[
\Phi^k(\tau^{k}) = diag\left [1,\dots,1,0,1,\dots,1\right ]\, ,
\]
 the null-entry being in the $k-$th position.

Then we minimize the following functional
\[
J^k(x,\alpha) = \mathbb E \int_{\hat{\tau}^{1}}^{\hat{\tau}^{2}} \BL^k \left (t,\BX^k (t),\alpha^k(t) \right ) dt + \BG^k\left (\hat{\tau}^{2}, \mathbf{X}^k (\hat{\tau}^{2}) \right ) \, ,\\
\]
where $\BL^k$ and $\BG^k$ are assumed to satisfy assumptions \ref{ASS:2}, while $\hat{\tau}^{2}$ is a stopping time triggered as soon as $\BX^k$ hits a defined boundary. In particular, we define the stopping boundary 
\[
v^{k} = \left (v^{1;k},\dots,v^{k-1;k},1,v^{k+1;k},\dots,v^{n;k}\right )\, ,\quad t \in [\hat{\tau}^{1},T]\, ,
\]
and, following the same scheme as before, we define by
\[
\tau^{i;k} := T \wedge min \left \{ t \geq \hat{\tau}^{1} \, : \, X^{i;k}(t) = v^{i;k}\, \right \}\,,\quad i=1,\dots,k-1,k+1,\dots,n ,
\]
the first time $X^{i;k}$ reaches the boundary $v^{i;k}$, denoting  
\[
\hat{\tau}^{2} := \tau^{1;k} \wedge \dots \wedge \tau^{k-1;k} \wedge \tau^{k+1;k} \wedge \dots \wedge \tau^{n;k}\, .
\]

It folllows that, considering for instance the case $\tau^{l;k}$ has been triggered by $X^{l;k}$, we have $\hat{\tau}^{2} \equiv \hat{\tau}^{l;k}$, meaning that $v^{l;k}$ has been hit. Iteratively proceeding, we consequently define

\[
\begin{split}
\BX^{(k,l)}(t) &= \left (X^{1;(k,l)}(t),\dots,X^{k-1;(k,l)}(t),0, X^{k+1;(k,l)}(t),\dots,\right .\\
&\left .\,\quad ,X^{l-1;(k,l)}(t),0,X^{l+1;(k,l)}(t),\dots,X^{n;(k,l)}(t)\right )^T\,,
\end{split}
\]

again assuming $\BX^{(k,l)}(t)$ evolves according to a system  as in \eqref{EQN:VectBan1}, and so on
until either no nodes are left or the terminal time $T$ is reached. 


As mentioned above, one of the major novelty of the present work consists in not assuming the knowledge 
of the stopping times order. From a mathematical point of view, the latter implies that we have to consider
all the possible combinations of such {\it critical points} during a given time interval $[0,T]$. 
Let us note that this is in fact the natural setting to work with having in mind the modelling of concrete
scenarios, as happens, e.g., concerning possible multiple failures happening within a system of interconnected
banks.

Therefore, in what follows we are going to denote by 
 $C_{n,k}$ the combinations of $k$ elements from a set of $n$, while $\pi^k \in C_{n,k}$ stands for one of those {\it element}. 
Hence, exploiting the  notation introduced above, we define the process $\BX = \left (\BX(t)\right )_{t \in [0,T]}$ as
\begin{equation}\label{EQN:BankInd1}
\BX(t) =  \BX^0(t) \Ind{t < \hat{\tau}^{1}} +\sum_{k =1}^{n-1} \sum_{\pi^k \in C_{n,k}} \BX^{\pi^k}(t) \Ind{\tau^{\pi^k} < t < \hat{\tau}^{k+1}} \, ,
\end{equation}
where each $ \BX^{\pi^k}(t)$ is defined as above and, consequently, the  
the global control reads as follow
\begin{equation}\label{EQN:Control1}
\alpha(t) =  \alpha^0(t) \Ind{t < \hat{\tau}^{1}} +\sum_{k =0}^{n-1} \sum_{\pi^k \in C_{n,k}} \alpha^{\pi^k}(t) \Ind{\tau^{\pi^k} < t < \hat{\tau}^{k+1}} \, .
\end{equation}

\begin{Remark}
Let us underlined within the setting defined so far,
each stopping time $\hat{\tau}^k$ depends on previously triggered stopping times $\tau^{\pi^j}$, $j=1,\dots,k-1$. As a consequence, also the solution $X^{\pi^k}$ in \eqref{EQN:BankInd1} depends on triggered stopping times as well as on their order. 
To simplify notation, we have avoided to explicitly write such dependencies,  defining for short
\[
\hat{\tau}^k := \hat{\tau}^k(\hat{\tau}^1,\dots,\hat{\tau}^{k-1})\,.
\]
\end{Remark}

By equation \eqref{EQN:BankInd1} we have that the dynamic for $\BX$ is given by
\begin{equation}\label{EQN:BankDynInd1}
d \BX (t) = \BD(t,\BX(t),\alpha(t)) dt + \BV(t,\BX(t),\alpha(t)) d W(t)\, ,
\end{equation}
where, according to the above introduced notation, we have defined 
\begin{equation}\label{EQN:DVolInd}
\begin{split}
\BD(t,\BX(t),\alpha(t)) &= \BD^0(t,\BX^0(t),\alpha^0(t)) \Ind{t < \hat{\tau}^{1}} + \\
&+\sum_{k =1}^{n-1} \sum_{\pi^k \in C_{n,k}} \BD^{\pi^k}(t,\BX^{\pi^k}(t),\alpha^{\pi^k}(t))  \Ind{\tau^{\pi^k} < t < \hat{\tau}^{k+1}} \, ,\\
\BV(t,\BX(t),\alpha(t)) &= \BV^0(t,\BX^0(t),\alpha^0(t))\Ind{t < \hat{\tau}^{1}} +\\
&+ \sum_{k =1}^{n-1} \sum_{\pi^k \in C_{n,k}} \BV^{\pi^k}(t,\BX^{\pi^k}(t),\alpha^{\pi^k}(t)) \Ind{\tau^{\pi^k} < t < \hat{\tau}^{k+1}} \, ,
\end{split}
\end{equation}
aiming at minimizing the 
following functional
\begin{equation}\label{EQN:GenCP1}
\begin{split}
J(x,\alpha) &:= \mathbb E  \int_0^{\hat{\tau}^{n}} \BL \left (t,\BX (t), \alpha(t) \right ) dt + \BG\left (\hat{\tau}^{n}, \mathbf{X} \left (\hat{\tau}^{n}\right ) \right ) \, .\\
\end{split}
\end{equation}
$\BL$ and $\BG$ being defined as

\[
\begin{split}
\BL(t,\BX(t),\alpha(t)) &= \BL^0(t,\BX^0(t),\alpha^0(t))  \Ind{t < \hat{\tau}^{1}} +\\
&+ \sum_{k =1}^{n-1} \sum_{\pi^k \in C_{n,k}} \BL^{\pi^k}(t,\BX^{\pi^k}(t),\alpha^{\pi^k}(t))  \Ind{\tau^{\pi^k} < t < \hat{\tau}^{k+1}} \, ,\\
\BG\left (\hat{\tau}^{n}, \mathbf{X} \left (\hat{\tau}^{n}\right ) \right ) &=\BG^0(\hat{\tau}^1,\BX^0(\hat{\tau}^1))  \Ind{\hat{\tau}^1 \leq T} +\\
&+ \sum_{k =1}^{n} \sum_{\pi^k \in C_{n,k}} \BG^{\pi^k}(\tau^{\pi^k},\BX^{\pi^k}(\tau^{\pi^k}))  \Ind{\tau^{\pi^k} < T \leq  \hat{\tau}^{k+1}}  \, .
\end{split}
\]

\begin{Remark}
It is worth to mention that we are considering the sums stated above as to be done
over all possible combinations, hence implying we are not considering components' order, namely considering $X^{(k,l)} = X^{(l,k)}$. Dropping such an assumption
implies that the sums in equations \eqref{EQN:BankInd1}--\eqref{EQN:Control1}--\eqref{EQN:DVolInd} have to be considered 
 over the disposition $D_{n,k}$.
\end{Remark}

In what follows we shall give an example of the theory developed so far, as to better clarify our approach as well as its concrete applicability.

\begin{Example}
Let us consider the case of a system constituted by just $n=2$ components. Then equation \eqref{EQN:BankInd1} becomes
\[
\BX(t) = \BX^0(t) \Ind{t < \hat{\tau}^{1}} + \BX^{1}(t) \Ind{\tau^1 < t < \hat{\tau}^{2}} + \BX^{2}(t) \Ind{\tau^2 < t < \hat{\tau}^{2}}\, ,
\]
where $\BX^{0}(t)$, resp. $\BX^{1}(t)$, resp. $\BX^{2}(t)$, denotes the dynamics in case neither $1$ nor $2$ has hit the stopping boundary, resp. $1$ has, resp. $2$ has.

Then, denoting by $\alpha^0(t)$, $\alpha^1(t)$ and $\alpha^2(t)$ the respective associated controls, we have that the functional \eqref{EQN:GenCP1} reads
\[
\begin{split}
J(x,\alpha) &:= \mathbb E  \int_0^{\hat{\tau}^{1}} \BL^0 \left (t,\BX^0 (t),\alpha^0(t) \right ) dt + \BG^0\left (\hat{\tau}^{1}, \mathbf{X}^0 (\hat{\tau}^{1}) \right ) + \\
&+ \mathbb E  \int_{\tau^1}^{\hat{\tau}^{2}} \BL^1 \left (t,\BX^1 (t), \alpha^1(t) \right ) dt + \BG^1\left (\hat{\tau}^{2}, \mathbf{X}^1 (\hat{\tau}^{2}) \right ) +\\
&+ \mathbb E \int_{\tau^2}^{\hat{\tau}^{2}} \BL^2 \left (t,\BX^2 (t),\alpha^2(t) \right ) dt + \BG^2\left (\hat{\tau}^{2}, \mathbf{X}^2 (\hat{\tau}^{2}) \right ) \, .
\end{split}
\]
\end{Example}

\subsection{A necessary maximum principle}\label{SEC:NMP}

The main issue in solving the optimal control problem defined in Section \ref{SEC:GS} consists in solving  a series of connected optimal problems, each of which may depends on previous ones. Moreover, we do not assume to have an a priori knowledge about the stopping times' order. 

To overcome such issues, we consider a {\it backward approach}. In particular,
we first solve the last control problem, then proceeding with the penultimate, and so on, until the first one, via  backward induction. 
Let us underline that assuming the perfect knowledge of the stopping times' order would imply a simplification of the backward scheme, because of the need  to solve only  $n$ control problems, then saving us to take into account all the combinations. Nevertheless in one case as in the other, the backward procedure runs analogously.

Aiming at deriving a 
\textit{global maximum principle}, in what follows we denote 
 by $\partial_x$ the partial derivative w.r.t. the space variable $x \in \RR^n$ and by $\partial_a$ the partial derivative w.r.t. the control $a \in A^n$.
Moreover we assume

\begin{Assumptions}\label{ASS:CF}
\begin{description}
\item[(i)] For any $\pi^k \in C_{n,k}$, $k=1,\dots,n$, it holds that $\BD^{\pi^k}$ and $\BV^{\pi^k}$ are continuously differentiable w.r.t. to both $x \in \RR^n$ and to $a \in A$. Furthermore, there exists a constant $C_1>0$ such that for any $t \in [0,T]$, $x \in \RR^n$ and $a \in A$, it holds
\[
\begin{split}
|\partial_x \BD^{\pi^k}(t,x,a)| + |\partial_a \BD^{\pi^k}(t,x,a)| \leq C_1\, ,\\
|\partial_x \BV^{\pi^k}(t,x,a)| + |\partial_a \BV^{\pi^k}(t,x,a)| \leq C_1\, .\\
\end{split}
\]

\item[(ii)] For any $\pi^k \in C_{n,k}$, $k=1,\dots,n$, it holds that $\BL^{\pi^k}$, resp. $\BG^{\pi^k}$, is continuously differentiable w.r.t. to both $x \in \RR^n$ and  $a \in A^n$, resp. only w.r.t.  $x \in \RR^n$. Furthermore, there exists a constant $C_2>0$ such that for any $t \in [0,T]$, $x \in \RR^n$ and $a \in A^n$, it holds
\[
\begin{split}
&|\partial_x \BL^{\pi^k}(t,x,a)| + |\partial_a \BL^{\pi^k}(t,x,a)| \leq C_2(1+|x|+|a|)\, ,\\
&|\partial_x \BG^{\pi^k}(t,x)| \leq C_2\,.\\
\end{split}
\]
\end{description}
\end{Assumptions}

We thus have the following result.

\begin{Theorem}\label{THM:SMP}[Necessary Maximum Principle]
Let assumptions \ref{ASS:1}--\ref{ASS:2}--\ref{ASS:CF} hold and let $\left (\bar{\BX},\bar{\alpha}\right )$ be an optimal pair for the problem \eqref{EQN:BankDynInd1}--\eqref{EQN:GenCP1}, then it holds
\begin{equation}\label{EQN:OptimalSol}
\begin{split}
\langle \partial_a \mathit{H}(t,\bar{\BX}(t),\bar{\alpha}(t),\bar{Y}(t),\bar{Z}(t)), \left (\bar{\alpha}(t) -\tilde{\alpha}\right )\rangle &\leq 0 \, ,\\
& \mbox{a.e. }\, t \in [0,\hat{\tau}^{n}]\, ,\mathbb{P}-a.s\, , \forall \tilde{\alpha} \in A\, ,
\end{split}
\end{equation}
equivalently
\[
\bar{\alpha}(t) = \arg \min_{\tilde{\alpha}\in A} \mathit{H}(t,\bar{\BX}(t),\tilde{\alpha}(t),Y(t),Z(t))\, ,
\]

where the pair $(Y(t),Z(t))$ solves the following dual backward equation
\[
\begin{split}
Y(t)&= Y^0(t)  \Ind{t < \hat{\tau}^{1}} + \sum_{k =1}^{n-1} \sum_{\pi^k \in C_{n,k}} Y^{\pi^k}(t) \Ind{\tau^{\pi^k} < t < \hat{\tau}^{k+1}} \, ,\\
Z(t)&= Z^0(t)  \Ind{t < \hat{\tau}^{1}} + \sum_{k =1}^{n-1} \sum_{\pi^k \in C_{n,k}} Z^{\pi^k}(t) \Ind{\tau^{\pi^k} < t < \hat{\tau}^{k+1}} \, ,\\
\end{split}
\]
the pairs  $(Y^{\pi^k}(t),Z^{\pi^k}(t))$ being solutions of the following system of interconnected BSDEs
\begin{equation}\label{EQN:SMP}
\begin{split}
&
\begin{cases}
- d Y^{\pi^{n-1}} (t) &= \partial_x \mathit{H}^{\pi^{n-1}} (t,\BX^{\pi^{n-1}}(t),\alpha^{\pi^{n-1}}(t),Y^{\pi^{n-1}}(t),Z^{\pi^{n-1}}(t)) dt - Z^{\pi^{n-1}} dW(t)\, ,\\
Y^{\pi^{n-1}}(\hat{\tau}^{n}) &= \partial_x \BG^{\pi^{n-1}}(\hat{\tau}^{n},\BX^{\pi^{n-1}}(\hat{\tau}^{n}))\, ,
\end{cases}\\
\\
&
\begin{cases}
- d Y^{\pi^k} (t) &= \partial_x \mathit{H}^{\pi^k} (t,\BX^{\pi^k}(t),\alpha^{\pi^k}(t),Y^{\pi^k}(t),Z^{\pi^k}(t)) dt - Z^{\pi^k} dW(t)\, ,\\
Y^{\pi^k}(\hat{\tau}^{k+1}) &= \partial_x \BG^{\pi^k}(\hat{\tau}^{k+1},\BX^{\pi^k}(\hat{\tau}^{k+1})) + \bar{Y}^{k+1}(\hat{\tau}^{k+1})\, ,
\end{cases}\\
\\
&
\begin{cases}
- d Y^{0} (t) = \partial_x \mathit{H}^{0} (t,\BX^{0}(t),\alpha^{0}(t),Y^{0}(t),Z^{0}(t)) dt - Z^{0} dW(t)\, ,\\
Y^{0}(\tau_{1}) = \partial_x \BG^0(\tau_{1},\BX^{0}(\tau_{1})) + \bar{Y}^1(\tau_1)\, ,
\end{cases}
\end{split}
\end{equation}
having denoted by
\[
\bar{Y}^{\pi^{k+1}}(\hat{\tau}^{k+1}) := \sum_{\pi^{k+1} \in C_{n,k+1}} Y^{\pi^{k+1}}\left (\tau^{\pi^{k+1}}\right ) \Ind{\hat{\tau}^{k+1} = \tau^{\pi^{k+1}}}\, ,
\]
where $\mathit{H}^{\pi^k}$ is the \textit{generalized Hamiltonian} 
\[
\mathit{H}^{\pi^k}: [0,T] \times \RR^{n} \times A \times \RR^{n} \times \RR^{n \times n} \to \RR\, ,
\]
defined as
\begin{equation}\label{EQN:Ham0SC}
\begin{split}
\mathit{H}^{\pi^k}(t,x^{\pi^k},a^{\pi^k},y^{\pi^k},z^{\pi^k}) &:= \BD^{\pi^k}(t,x^{\pi^k},a^{\pi^k}) \cdot y^{\pi^k} +\\
&+ Tr[(\BV^{\pi^k}(t,x^{\pi^k},a^{\pi^k}))^* z^{\pi^k}] + \BL^{\pi^k}(t,x^{\pi^k},a^{\pi^k})\, ,
\end{split}
\end{equation}
and $\mathit{H}$ represents the global \textit{generalized Hamiltonian} defined as
\[
\begin{split}
\mathit{H}(t,x,a,y,z)&=\mathit{H}^0(t,x,a,y,z)  \Ind{t <  \hat{\tau}^{1}} + \\
&+\sum_{k =1}^{n-1} \sum_{\pi^k \in C_{n,k}} \mathit{H}^{\pi^k}(t,x,a,y,z) \Ind{\tau^{\pi^k} < t < \hat{\tau}^{k+1}} \, .\\
\end{split}
\]
\end{Theorem}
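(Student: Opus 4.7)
The proof proceeds by backward induction on the number of stopping times still to be triggered, exploiting the dynamic programming structure described after Equation \eqref{EQN:BankInd1}. Since the control set $A$ is convex and the coefficients are continuously differentiable in $(x,a)$ with the growth estimates of Assumptions \ref{ASS:CF}, I would use the classical convex perturbation technique: for a fixed admissible $\tilde{\alpha}$ and $\epsilon \in (0,1]$ set $\alpha^{\epsilon} := \bar{\alpha} + \epsilon(\tilde{\alpha}-\bar{\alpha})$, let $\BX^{\epsilon}$ be the corresponding controlled trajectory given by Equation \eqref{EQN:BankDynInd1}, and study the first-order expansion in $\epsilon$ of $J(x,\alpha^{\epsilon}) - J(x,\bar{\alpha})$.

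\textbf{Base step.} On the last interval $[\hat{\tau}^{n-1},\hat{\tau}^{n}]$ only one component is active, and the dynamics reduces to \eqref{EQN:Sys2} with $k=\pi^{n-1}$. Standard SDE estimates based on Assumptions \ref{ASS:1}--\ref{ASS:CF}(i) show that $\epsilon^{-1}(\BX^{\pi^{n-1},\epsilon}-\bar{\BX}^{\pi^{n-1}})$ converges in $\Sp$ to the first variation process $V^{\pi^{n-1}}$, which solves the linearised SDE along $\bar{\BX}^{\pi^{n-1}},\bar{\alpha}^{\pi^{n-1}}$ with source term $\partial_{a}\BD^{\pi^{n-1}}(\tilde{\alpha}-\bar{\alpha})dt + \partial_{a}\BV^{\pi^{n-1}}(\tilde{\alpha}-\bar{\alpha})dW$. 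Differentiating $J(\bar{\alpha}) \le J(\alpha^{\epsilon})$ at $\epsilon=0^{+}$ gives
\[
\mathbb{E}\!\left[\int_{\hat{\tau}^{n-1}}^{\hat{\tau}^{n}}\!\bigl(\partial_{x}\BL^{\pi^{n-1}}\,V^{\pi^{n-1}} + \partial_{a}\BL^{\pi^{n-1}}(\tilde{\alpha}-\bar{\alpha})\bigr)\,dt + \partial_{x}\BG^{\pi^{n-1}}\!\cdot V^{\pi^{n-1}}(\hat{\tau}^{n})\right] \geq 0.
\]
Introducing the BSDE for $(Y^{\pi^{n-1}},Z^{\pi^{n-1}})$ with terminal condition $\partial_{x}\BG^{\pi^{n-1}}(\hat{\tau}^{n},\bar{\BX}^{\pi^{n-1}}(\hat{\tau}^{n}))$, applying It\^{o}'s formula to $\langle Y^{\pi^{n-1}}, V^{\pi^{n-1}}\rangle$ between $\hat{\tau}^{n-1}$ and $\hat{\tau}^{n}$ and invoking the definition \eqref{EQN:Ham0SC} of the Hamiltonian, the duality cancels the linearised drift/diffusion terms and yields the variational inequality \eqref{EQN:OptimalSol} restricted to this interval.

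\textbf{Inductive step.} Assume the theorem has been proved on every interval $[\hat{\tau}^{j},\hat{\tau}^{j+1}]$ with $j=k+1,\dots,n-1$, and fix $\pi^{k}\in C_{n,k}$. Perturbing $\bar{\alpha}^{\pi^{k}}$ on $[\hat{\tau}^{k},\hat{\tau}^{k+1}]$ perturbs the terminal state $\bar{\BX}^{\pi^{k}}(\hat{\tau}^{k+1})$ and, through the matching $\BX^{\pi^{k+1}}(\hat{\tau}^{k+1})=\Phi^{k+1}(\hat{\tau}^{k+1})\bar{\BX}^{\pi^{k}}(\hat{\tau}^{k+1})$, the entire cost incurred on all subsequent intervals. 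The induction hypothesis identifies the sensitivity of that future cost with respect to the state at $\hat{\tau}^{k+1}$: on the disjoint events $\{\hat{\tau}^{k+1}=\tau^{\pi^{k+1}}\}$ it equals $Y^{\pi^{k+1}}(\tau^{\pi^{k+1}})$, and summing over $\pi^{k+1}\in C_{n,k+1}$ produces exactly $\bar{Y}^{k+1}(\hat{\tau}^{k+1})$. Adding this to the local terminal contribution $\partial_{x}\BG^{\pi^{k}}(\hat{\tau}^{k+1},\bar{\BX}^{\pi^{k}}(\hat{\tau}^{k+1}))$ produces the terminal condition of \eqref{EQN:SMP}; repeating the convex perturbation / It\^{o} duality argument on $[\hat{\tau}^{k},\hat{\tau}^{k+1}]$ closes the induction. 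Putting together all the $Y^{\pi^{k}}$ via the indicators $\Ind{\tau^{\pi^{k}}<t<\hat{\tau}^{k+1}}$ gives the global adjoint pair $(Y,Z)$ and hence \eqref{EQN:OptimalSol}.

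\textbf{Expected main obstacle.} The delicate point is the dependence of the random stopping times $\hat{\tau}^{j}$ on the control. The perturbed dynamics induces perturbed stopping times $\hat{\tau}^{j,\epsilon}$, and one must show that the resulting boundary contributions in the expansion of $J(x,\alpha^{\epsilon})$ are of order $o(\epsilon)$, so that only the terminal adjoint contribution survives. This is where the use of \emph{convex} rather than needle variations is essential: combined with a mild transversality condition of $\bar{\BX}^{\pi^{k}}$ at the boundary $v^{\pi^{k}}$ and the boundedness of $\partial_{x}\BG^{\pi^{k}}$ in \ref{ASS:CF}(ii), one can absorb the stopping-time shift into the higher-order remainder. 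The secondary difficulty is purely bookkeeping over $C_{n,k}$ and over the events $\{\hat{\tau}^{k+1}=\tau^{\pi^{k+1}}\}$, and must be carried out carefully so that every indicator in the decomposition of $Y$ and $Z$ matches precisely the corresponding interval $[\tau^{\pi^{k}},\hat{\tau}^{k+1}]$.
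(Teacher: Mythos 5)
Your proposal follows essentially the same route as the paper's proof: a backward induction in which each stage uses a convex perturbation of the optimal control, the associated first variation process, and an It\^{o} duality with the adjoint BSDE whose terminal condition carries the future adjoint contribution $\bar{Y}^{k+1}(\hat{\tau}^{k+1})$, exactly as in \eqref{EQN:SMP}. The only difference is that you explicitly flag, and sketch a transversality argument for, the dependence of the hitting times on the control, a point the paper's variational computation passes over silently by carrying out the expansion between the stopping times of the optimal trajectory.
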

\begin{Remark}\label{REM:Spieg}
Before entering into details about proving Theorem \ref{THM:SMP}, let us underline some of its characteristics.
In particular, here the main idea is to find a solution iteratively acting backward in time.
Therefore, starting from the very last control problem, namely the case where a single node is left into the system, we consider a standard maximum principle. Indeed, $Y^{\pi^{n-1}}$ in \eqref{EQN:SMP} represents a classical dual BSDE form associated to the standard \textit{stochastic maximum principle}, see, e.g., \cite[Th. 3.2]{Yon}.
Then, we can consider  the second last control problem. A this point, a naive tentative to obtain a {\it global} solution, could
be to first solve such penultimate problem to then gluing together the obtained solutions.
Nevertheless, such a method only produces a  a suboptimal solution. 
Instead, the right approach, similarly to what happens applying the standard \textit{dynamic programming principle}, consists in treating the solution to the last control problem as the terminal cost for the subsequent (second last) control problem, and so on for the remaining ones.

It follows that, in deriving the global optimal solution, one considers the cost coming from future evolution of the system. Mathematically, this is clearly expressed by the terminal condition $Y^{\pi^k}$ the equation \eqref{EQN:SMP} is endowed with. Therefore the solution scheme resulting in a global connection of all the control problems we have to consider, from the very last of them and then backward to the first one.

\end{Remark}
\begin{proof}[Proof. [Necessary Maximum Principle]]
We proceed according to a backward induction technique. In particular,
for $t_0> \hat{\tau}^{n-1}$ the proof follows from the standard \textit{stochastic necessary maximum principle}, see, e.g.. \cite[Th. 3.2]{Yon}.
Then we consider the case of $ \hat{\tau}^{n-2} < t_0 < \hat{\tau}^{n-1}$, and we define
\[
\bar{\alpha} :=
\begin{cases}
\bar{\alpha}^{\pi^{n-2}}(t) & t_0<t<\hat{\tau}^{n-2}\, ,\\
\bar{\alpha}^{\pi^{n-1}}(t) & \hat{\tau}^{n-2}<t<\hat{\tau}^{n-1}\, .
\end{cases}
\]
to be the optimal control, $\alpha$ being another admissible control and further setting $\alpha^h$ as
\[
\alpha^h := \bar{\alpha} + h \alpha\, ,\quad h >0\, .
\]
Since in the present case the cost functional reads as follow
\[
\begin{split}
J(x,\alpha) &:= \mathbb E \int_{t_0}^{\hat{\tau}^{n-1}} \BL^{\pi^{n-2}} \left (t,\BX^{\pi^{n-2}} (t),\alpha^{\pi^{n-2}}(t) \right ) dt + \BG^{\pi^{n-2}}\left (\hat{\tau}^{n-1}, \mathbf{X} (\hat{\tau}^{n-1}) \right ) +\\
&+\sum_{\pi^{n-1} \in C_{n,n-1}} \mathbb E \int_{\tau^{\pi^{n-1}}}^{\hat{\tau}^{n}} \BL^{\pi^{n-1}} \left (t,\BX^{\pi^{n-1}} (t),\alpha^{\pi^{n-1}}(t) \right ) dt +\\
&+ \BG^{\pi^{n-1}}\left (\hat{\tau}^{n}, \mathbf{X} (\hat{\tau}^{n}) \right )\, ,
\end{split}
\]
we can choose 
$\alpha = \bar{\alpha}- \tilde{\alpha}$, $\tilde{\alpha} \in A$. Then, by the optimality of $\bar{\alpha}$ and via a standard variational argument, see, e.g., \cite{BCD,Mou,Yon}, we have
\[
 J(x,\bar{\alpha}) - J(x,\alpha^h) \leq 0\, ,
\]
which implies
\[
\lim_{h \to 0} \frac{ J(x,\bar{\alpha}) - J(x,\alpha^h)}{h} \leq 0\, .
\]

In what follows, for the sake of clarity, we will denote by $\BX_{\alpha}$ the solution $\BX$ with control $\alpha$. Thus, from the optimality of $\bar{\alpha}$, we have
\begin{equation}\label{EQN:VarIneqB}
\begin{split}
&\mathbb E \int_{t_0}^{\hat{\tau}^{n-1}} \BL^{\pi^{n-2}} \left (t,\bar{\BX}_{\bar{\alpha}}^{\pi^{n-2}} (t),\bar{\alpha}^{\pi^{n-2}}(t) \right ) dt + \BG^{\pi^{n-2}}\left (\hat{\tau}^{n-1}, \bar{\mathbf{X}}_{\bar{\alpha}}^{\pi^{n-2}} (\hat{\tau}^{n-1})\right )+\\
&+\sum_{\pi^{n-1} \in C_{n,n-1}}\int_{\tau^{\pi^{n-1}}}^{\hat{\tau}^{n}} \BL^{\pi^{n-1}} \left (t,\bar{\BX}_{\bar{\alpha}}^{\pi^{n-1}} (t), \bar{\alpha}^{\pi^{n-1}}(t) \right ) dt + \BG^{\pi^{n-1}}\left (\hat{\tau}^{n}, \bar{\mathbf{X}}_{\bar{\alpha}}^{\pi^{n-1}} (\hat{\tau}^{n})\right ) \leq \\
&\leq \mathbb E \int_{t_0}^{\hat{\tau}^{n-1}} \BL^{\pi^{n-2}} \left (t,\bar{\BX}_{\alpha^h}^{\pi^{n-2}} (t),\bar{\alpha}^{\pi^{n-2}}(t) \right ) dt + \BG^{\pi^{n-2}}\left (\hat{\tau}^{n-1}, \bar{\mathbf{X}}_{\alpha^h}^{\pi^{n-2}} (\hat{\tau}^{n-1})\right )+\\
&+\sum_{\pi^{n-1} \in C_{n,n-1}}\int_{\tau^{\pi^{n-1}}}^{\hat{\tau}^{n}} \BL^{\pi^{n-1}} \left (t,\bar{\BX}_{\alpha^h}^{\pi^{n-1}} (t),\bar{\alpha}^{\pi^{n-1}}(t) \right ) dt + \BG^{\pi^{n-1}}\left (\hat{\tau}^{n}, \bar{\mathbf{X}}_{\alpha^h}^{\pi^{n-1}} (\hat{\tau}^{n})\right ) \, .\\
\end{split}
\end{equation}
Then, for any $\alpha \in \mathcal{A}$, by 
\eqref{EQN:VarIneqB}, we obtain 
\begin{equation}\label{EQN:VarIneqB2}
\begin{split}
&\mathbb E \int_{t_0}^{\hat{\tau}^{n-1}}  \partial_x \BL^{\pi^{n-2}} \left (t,\bar{\BX}_{\bar{\alpha}}^{\pi^{n-2}} (t) ,\bar{\alpha}^{\pi^{n-2}}(t)\right )\mathfrak{Z}^{\pi^{n-2}}(t)  dt+\\
&+ \partial_x \BG^{\pi^{n-2}}\left (\hat{\tau}^{n-1}, \bar{\mathbf{X}}_{\bar{\alpha}}^{\pi^{n-2}} (\hat{\tau}^{n-1})\right )\mathfrak{Z}^{\pi^{n-2}}(\hat{\tau}^{n-1}) + \\
&+\sum_{\pi^{n-1} \in C_{n,n-1}}\mathbb E \int_{\tau^{\pi^{n-1}}}^{\hat{\tau}^{n}} \partial_x \BL^{\pi^{n-1}} \left (t,\bar{\BX}_{\bar{\alpha}}^{\pi^{n-1}} (t),\bar{\alpha}^{\pi^{n-1}}(t) \right )\mathfrak{Z}^{\pi^{n-1}}(t)  dt +\\
&+\sum_{\pi^{n-1} \in C_{n,n-1}} \partial_x \BG^{\pi^{n-1}}\left (\hat{\tau}^{n}, \bar{\mathbf{X}}_{\bar{\alpha}}^{\pi^{n-1}} (\tau^{\pi^{n}})\right )\mathfrak{Z}^{\pi^{n-1}}(\hat{\tau}^{n}) \leq 0
\end{split}
\end{equation}
where $\mathfrak{Z}^{\pi^{n-1}}$ and $\mathfrak{Z}^{\pi^{n-2}}$ solve the first variation process

\begin{align*}
&
\begin{cases}
d \mathfrak{Z}^{\pi^{n-1}} (t) &= \partial_x \BD^{\pi^{n-1}}\left (t,\bar{\BX}^{\pi^{n-1}}(t), \alpha^{\pi^{n-1}}(t)\right )\mathfrak{Z}^{\pi^{n-1}} (t)dt +\\
&+ \partial_a \BD^{\pi^{n-1}}\left (t,\bar{\BX}^{\pi^{n-1}}(t), \alpha^{\pi^{n-1}}(t)\right )\alpha^{\pi^{n-1}}(t)dt +\\
&+ \partial_x \BV^{\pi^{n-1}}\left (t,\bar{\BX}^{\pi^{n-1}}(t), \alpha^{\pi^{n-1}}(t)\right )\mathfrak{Z}^{\pi^{n-1}} (t) dW(t)+\\
&+ \partial_a \BV^{\pi^{n-1}}\left (t,\bar{\BX}^{\pi^{n-1}}(t), \alpha^{\pi^{n-1}}(t)\right )\alpha^{\pi^{n-1}}(t) dW(t)\, ,\\
\mathfrak{Z}^{\pi^{n-1}} (\hat{\tau}^{n-1}) &= \bar{\mathfrak{Z}}^{\pi^{n-2}} (\hat{\tau}^{n-1})\, ,\quad t \in [\hat{\tau}^{n-1},\hat{\tau}^{n}]\, ,
\end{cases}
\\
&
\begin{cases}
d \mathfrak{Z}^{\pi^{n-2}} (t) &= \partial_x \BD^{\pi^{n-2}}\left (t,\bar{\BX}^{\pi^{n-2}}(t), \alpha^{\pi^{n-2}}(t)\right )\mathfrak{Z}^{\pi^{n-2}} (t)dt+\\
&+ \partial_a \BD^{\pi^{n-2}}\left (t,\bar{\BX}^{\pi^{n-2}}(t), \alpha^{\pi^{n-2}}(t)\right )\alpha^{\pi^{n-2}}(t) dt + \\
&+\partial_x \BV^{\pi^{n-2}}\left (t,\bar{\BX}^{\pi^{n-2}}(t), \alpha^{\pi^{n-2}}(t)\right )\mathfrak{Z}^{\pi^{n-2}} (t)dW(t) +\\
&+ \partial_a \BV^{\pi^{n-2}}\left (t,\bar{\BX}^{\pi^{n-2}}(t), \alpha^{\pi^{n-2}}(t)\right )\alpha^{\pi^{n-2}}(t) dW(t)\, ,\\
\mathfrak{Z}^{\pi^{n-2}} (t_0) &= 0\, ,\quad t \in [t_0,\hat{\tau}^{n-1}]\, .
\end{cases}
\end{align*}

Applying It\^{o} formula to $Y^{\pi^{n-2}} \cdot \mathfrak{Z}^{\pi^{n-2}}$, we have

\begin{equation}\label{EQN:ItoMP1}
\begin{split}
&\mathbb{E}\left ( \partial_x \BG^{\pi^{n-2}}\left (\hat{\tau}^{n-1},\BX^{\pi^{n-2}}(\hat{\tau}^{n-1})\right ) + \bar{Y}^{n-1}(\hat{\tau}^{n-1})\right )\cdot \mathfrak{Z}^{\pi^{n-2}}(\hat{\tau}^{n-1}) =\\
&= \mathbb{E}\, Y^{\pi^{n-2}}(\hat{\tau}^{n-1}) \cdot \mathfrak{Z}^{\pi^{n-1}}(\hat{\tau}^{n-1}) =\\
&= -\mathbb{E} \int_{t_0}^{\tau^{\pi^{n-1}}} \left (\partial_x \mathit{H}^{\pi^{n-2}} (t,\BX^{\pi^{n-2}}(t),\alpha^{\pi^{n-2}}(t),Y^{\pi^{n-2}}(t),Z^{\pi^{n-2}}(t)) dt\right ) \cdot \mathfrak{Z}^{\pi^{n-2}}(t) dt+\\
&+\mathbb{E} \int_{t_0}^{\hat{\tau}^{n-1}} \left (\partial_x \BD^{\pi^{n-2}}\left (t,\bar{\BX}^{\pi^{n-2}}(t), \alpha^{\pi^{n-2}}(t)\right )\mathfrak{Z}^{\pi^{n-2}} (t)\right ) \cdot Y^{\pi^{n-2}}(t) dt +\\
&+\mathbb{E} \int_{t_0}^{\hat{\tau}^{n-1}}  \left ( \partial_a \BD^{\pi^{n-2}}\left (t,\bar{\BX}^{\pi^{n-2}}(t), \alpha^{\pi^{n-2}}(t)\right )\alpha^{\pi^{n-2}}(t)\right ) \cdot Y^{\pi^{n-2}}(t) dt +\\
&+\mathbb{E} \int_{t_0}^{\hat{\tau}^{n-1}} \left (\partial_x \BV^{\pi^{n-2}}\left (t,\bar{\BX}^{\pi^{n-2}}(t), \alpha^{\pi^{n-2}}(t)\right )\mathfrak{Z}^{\pi^{n-2}} (t)\right )\cdot Z^{\pi^{n-2}} (t)dt +\\
&+\mathbb{E} \int_{t_0}^{\hat{\tau}^{n-1}} \left (\partial_a \BV^{\pi^{n-2}}\left (t,\bar{\BX}^{\pi^{n-2}}(t), \alpha^{\pi^{n-2}}(t)\right )\alpha^{\pi^{n-2}}(t)\right )\cdot Z^{\pi^{n-2}} (t)dt = \\
&=- \mathbb{E}\int_{t_0}^{\hat{\tau}^{n-1}} \partial_x \BL^{\pi^{n-2}}\left (t,\BX^{\pi^{n-2}}(t),\alpha^{\pi^{n-2}}(t)\right )\mathfrak{Z}^{\pi^{n-2}}(t) \\
&+\mathbb{E}\int_{t_0}^{\hat{\tau}^{n-1}} \left ( \partial_a \BD^{\pi^{n-2}}\left (t,\bar{\BX}^{\pi^{n-2}}(t), \alpha^{\pi^{n-2}}(t)\right )Y^{\pi^{n-2}}(t)\right ) \cdot \alpha^{\pi^{n-2}}(t)dt+\\
&+ \mathbb{E}\int_{t_0}^{\hat{\tau}^{n-1}} \left (\partial_a \BV^{\pi^{n-2}}\left (t,\bar{\BX}^{\pi^{n-2}}(t), \alpha^{\pi^{n-2}}(t)\right )Z^{\pi^{n-2}}(t)\right ) \cdot \alpha^{\pi^{n-2}}(t)dt+\\\, ,
\end{split}
\end{equation}

and similarly for $Y^{\pi^{n-1}} \cdot \mathfrak{Z}^{\pi^{n-1}}$, we obtain
\begin{equation}\label{EQN:ItoMP2}
\begin{split}
&\mathbb{E}\left ( \partial_x \BG^{\pi^{n-1}}(\hat{\tau}^{n},\BX^{\pi^{n-1}}(\hat{\tau}^{n}))\right )\cdot \mathfrak{Z}^{\pi^{n-1}}(\hat{\tau}^{n}) = \mathbb{E}\, Y^{\pi^{n-1}}(\hat{\tau}^{n}) \cdot \mathfrak{Z}^{\pi^{n-1}}(\hat{\tau}^{n}) =\\
&=\mathbb{E} Y^{\pi^{n-1}}(\tau^{\pi^{n-1}})\cdot \mathfrak{Z}^{\pi^{n-1}}(\tau^{\pi^{n-1}})+\\
&-\mathbb{E}\int_{\tau^{\pi^{n-1}}}^{\hat{\tau}^{n}} \partial_x \BL^{\pi^{n-1}}(t,\BX^{\pi^{n-1}}(t),\bar{\alpha}^{\pi^{n-1}}(t))\mathfrak{Z}^{\pi^{n-1}}(t) dt+\\
&+\mathbb{E}\int_{\tau^{\pi^{n-1}}}^{\hat{\tau}^{n}} \left ( \partial_a \BD^{\pi^{n-1}}\left (t,\bar{\BX}^{\pi^{n-1}}(t), \alpha^{\pi^{n-1}}(t)\right )Y^{\pi^{n-1}}(t)\right ) \cdot \alpha^{\pi^{n-1}}(t)dt+\\
&+\mathbb{E}\int_{\tau^{\pi^{n-1}}}^{\hat{\tau}^{n}} \left ( \partial_a \BV^{\pi^{n-1}}\left (t,\bar{\BX}^{\pi^{n-1}}(t), \alpha^{\pi^{n-1}}(t)\right )Z^{\pi^{n-1}}(t)\right ) \cdot \alpha^{\pi^{n-1}}(t)dt \, .
\end{split}
\end{equation}

Exploiting equation \eqref{EQN:VarIneqB2}, together with equations \eqref{EQN:ItoMP1}--\eqref{EQN:ItoMP2}, we thus have

{\footnotesize
\[
\begin{split}
&\int_{t_0}^{\hat{\tau}^{n-1}}\left (\partial_\alpha \mathit{H}^{\pi^{n-2}}(t,\bar{\BX}^{\pi^{n-2}}(t),\bar{\alpha}^{\pi^{n-2}}(t),\bar{Y}^{\pi^{n-2}}(t),\bar{Z}^{\pi^{n-2}}(t))\right )\alpha^{\pi^{n-2}}(t) dt +\\
&+ \sum_{\pi^{n-1}\in C_{n,n-1}}\int_{\tau^{\pi^{n-1}}}^{\hat{\tau}^{n}}\left (\partial_\alpha \mathit{H}^{\pi^{n-1}}(t,\bar{\BX}^{\pi^{n-1}}(t),\bar{\alpha}^{\pi^{n-1}}(t),\bar{Y}^{\pi^{n-1}}(t),\bar{Z}^{\pi^{n-1}}(t))\right )\alpha^{\pi^{n-1}}(t) dt \leq 0\, ,
\end{split}
\]
}

for all $\alpha =\bar{\alpha}- \tilde{\alpha}$, and thus we eventually obtain, for $t_0 > \hat{\tau}^{n-2}$
\[
\partial_\alpha \mathit{H}(t,\bar{\BX}(t),\bar{\alpha}(t),\bar{Y}(t),\bar{Z}(t)) ( \bar{\alpha}(t)-\tilde{\alpha}) \leq 0\, ,\quad \mbox{a.e. }\, t \in [t_0,\hat{\tau}^{n}]\, ,\mathbb{P}-a.s\, , \forall \tilde{\alpha} \in A\, ,
\]
which is the desired local form for optimality \eqref{EQN:OptimalSol}.
Analogously proceeding via backward induction, we derive that the same results also hold 
for any $\pi^k \in C_{n,k}$, hence obtaining the system \eqref{EQN:SMP} and concluding the proof.
\end{proof}

\subsection{A sufficient maximum principle}\label{SEC:SMP}

In this section we  consider a generalization of the classical sufficient maximum principle, see, e.g., \cite[Th. 6.4.6]{PhaBook}, for the present setting of interconnected multiple optimal control problems with random terminal time.
To this end, we assume
\begin{Assumptions}\label{ASS:S}
For any $\pi^k \in C_{n,k}$ the derivative w.r.t. $x$ of $\BD$, $\BV$ and $\BL$ are continuous and there exists a constant $L^a >0$ such that, for any $a_1$, $a_2 \in A$,
\[
\begin{split}
&|\BD^{\pi^k}(t,x,a_1)-\BD^{\pi^k}(t,x,a_2)| + |\BV^{\pi^k}(t,x,a_1)-\BV^{\pi^k}(t,x,a_2)|+\\
&+|\BL^{\pi^k}(t,x,a_1)-\BL^{\pi^k}(t,x,a_2)| \leq L^a|a_1-a_2|\, .
\end{split}
\]
\end{Assumptions}

\begin{Theorem}[Sufficient maximum principle]\label{THM:SSMP}
Let \ref{ASS:1}--\ref{ASS:2}--\ref{ASS:CF}--\ref{ASS:S} hold, let $\left (Y,Z\right )$ be the solution to  the dual BSDE \ref{EQN:SMP}, and suppose the following  conditions hold true 
\begin{description}
\item[(i)] the maps $x \mapsto G^{\pi^k}(x)$ are convex for any $\pi^k$;
\item[(ii)] the maps $(x,a)\mapsto H^{\pi^k}\left (x,a,Y^{\pi^k},Z^{\pi^k}\right )$ are convex for a.e. $t \in [0,T]$ and for any $\pi^k$;
\item[(iii)] for a.e. $t \in [0,T]$ and $\mathbb{P}-$a.s. it holds
\[
\bar{\alpha}^{\pi^k}(t) = \arg\min_{\tilde{\alpha}^{\pi^k} \in \mathcal{A}^{\pi^k}} H^{\pi^k}\left (t,X^{\pi^k}(t),\tilde{\alpha}(t),Y^{\pi^k},Z^{\pi^k}\right )\,,
\]
\end{description}
then $\left (\bar{\alpha},\bar{\mathbf{X}}\right )$ is an optimal pair for the problem \eqref{EQN:BankDynInd1}--\eqref{EQN:GenCP1}.
\end{Theorem}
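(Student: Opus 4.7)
The plan is to mimic the classical sufficient stochastic maximum principle on each stopping interval $[\hat{\tau}^{k},\hat{\tau}^{k+1}]$ and then glue the resulting local inequalities together through a backward induction that exploits the coupling in the terminal conditions of the BSDE system \eqref{EQN:SMP}. Fixing any admissible pair $(\alpha,\BX)$, I would first decompose the cost difference $J(x,\alpha)-J(x,\bar{\alpha})$ according to the indicator representation of $\BL$ and $\BG$ in \eqref{EQN:GenCP1} into a sum over $k=0,\dots,n-1$ and $\pi^{k}\in C_{n,k}$ of contributions each supported on a single interval; the goal is to show that each such contribution, once augmented by the boundary term it inherits from its successor interval, is nonnegative.

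For the terminal interval $[\hat{\tau}^{n-1},\hat{\tau}^{n}]$ (only one surviving node) I apply It\^{o}'s formula to $Y^{\pi^{n-1}}\cdot(\BX^{\pi^{n-1}}-\bar{\BX}^{\pi^{n-1}})$ and use \eqref{EQN:SMP} to rewrite its drift as $-\partial_{x}\mathit{H}^{\pi^{n-1}}(\bar{\BX},\bar{\alpha},Y,Z)\cdot(\BX-\bar{\BX})+Y\cdot(\BD^{\pi^{n-1}}(\BX,\alpha)-\BD^{\pi^{n-1}}(\bar{\BX},\bar{\alpha}))+\Tr[(\BV^{\pi^{n-1}}(\BX,\alpha)-\BV^{\pi^{n-1}}(\bar{\BX},\bar{\alpha}))^{*}Z]$. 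Convexity of $\BG^{\pi^{n-1}}$ (hypothesis (i)) yields $\BG^{\pi^{n-1}}(\BX(\hat{\tau}^{n}))-\BG^{\pi^{n-1}}(\bar{\BX}(\hat{\tau}^{n}))\geq Y^{\pi^{n-1}}(\hat{\tau}^{n})\cdot(\BX-\bar{\BX})(\hat{\tau}^{n})$, while convexity of $\mathit{H}^{\pi^{n-1}}$ in $(x,a)$ (hypothesis (ii)) combined with the minimality condition (iii)---which via convexity of $\mathcal{A}$ provides the variational inequality $\langle\partial_{a}\mathit{H}^{\pi^{n-1}}(\bar{\alpha}),\bar{\alpha}-\tilde{\alpha}\rangle\leq 0$ for every $\tilde{\alpha}\in\mathcal{A}$---produces $\mathit{H}^{\pi^{n-1}}(\BX,\alpha)-\mathit{H}^{\pi^{n-1}}(\bar{\BX},\bar{\alpha})\geq\partial_{x}\mathit{H}^{\pi^{n-1}}(\bar{\BX},\bar{\alpha})\cdot(\BX-\bar{\BX})$. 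Substituting $\mathit{H}=\BD\cdot y+\Tr[\BV^{*}z]+\BL$ and taking expectation cancels the $\BD\cdot y$ and $\Tr[\BV^{*}z]$ pieces against the It\^{o} expansion and leaves $\mathbb{E}\int_{\hat{\tau}^{n-1}}^{\hat{\tau}^{n}}[\BL^{\pi^{n-1}}(\BX,\alpha)-\BL^{\pi^{n-1}}(\bar{\BX},\bar{\alpha})]\,dt+\mathbb{E}[\BG^{\pi^{n-1}}(\BX(\hat{\tau}^{n}))-\BG^{\pi^{n-1}}(\bar{\BX}(\hat{\tau}^{n}))]\geq 0$, which is the base case of the induction.

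For the inductive step on an arbitrary interval $[\hat{\tau}^{k},\hat{\tau}^{k+1}]$ the same It\^{o}/convexity computation produces the analogous inequality, except that the terminal value of $Y^{\pi^{k}}$ in \eqref{EQN:SMP} carries the extra coupling term $\bar{Y}^{k+1}(\hat{\tau}^{k+1})$: the $\partial_{x}\BG^{\pi^{k}}$ contribution is absorbed as before by convexity of $\BG^{\pi^{k}}$, whereas the remainder $\bar{Y}^{k+1}(\hat{\tau}^{k+1})\cdot(\BX^{\pi^{k}}-\bar{\BX}^{\pi^{k}})(\hat{\tau}^{k+1})$ reproduces exactly the initial datum of the It\^{o} expansion carried out on the successor intervals $[\hat{\tau}^{k+1},\hat{\tau}^{k+2}]$ associated with each child $\pi^{k+1}\supset\pi^{k}$, thanks to the projection relation $\BX^{\pi^{k+1}}(\hat{\tau}^{k+1})=\Phi^{\pi^{k+1}}(\hat{\tau}^{k+1})\BX^{\pi^{k}}(\hat{\tau}^{k+1})$ in \eqref{EQN:Sys2}. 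This boundary contribution therefore telescopes against the corresponding term supplied by the inductive hypothesis at level $k+1$; summing the nonnegative local quantities over $k=n-1,\dots,0$ and over $\pi^{k}\in C_{n,k}$ reassembles precisely $J(x,\alpha)-J(x,\bar{\alpha})\geq 0$, which is the claim.

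The main delicate point I anticipate is a careful bookkeeping of the random times: under a perturbed control $\alpha$ the hitting times of $\BX$ differ a priori from those of $\bar{\BX}$, whereas the BSDE system \eqref{EQN:SMP} and the stopping times $\hat{\tau}^{k}$ are defined through $\bar{\BX}$. One therefore has to verify that applying It\^{o}'s formula on the intervals $[\hat{\tau}^{k},\hat{\tau}^{k+1}]$ and summing against the indicator decomposition \eqref{EQN:BankInd1} still reconstructs the whole cost $J(x,\alpha)$, and that the telescoping identity $\mathbb{E}[Y(\hat{\tau}^{k+1})\cdot(\BX-\bar{\BX})(\hat{\tau}^{k+1})]=\mathbb{E}\bigl[\sum_{\pi^{k+1}\in C_{n,k+1}}Y^{\pi^{k+1}}(\hat{\tau}^{k+1})\cdot(\BX-\bar{\BX})(\hat{\tau}^{k+1})\,\Ind{\hat{\tau}^{k+1}=\tau^{\pi^{k+1}}}\bigr]$ holds after conditioning on $\mathcal{F}_{\hat{\tau}^{k+1}}$; both ingredients require the standard $L^{2}$ estimates for \eqref{EQN:SMP} guaranteed by Assumptions \ref{ASS:1} and \ref{ASS:CF}, together with Assumption \ref{ASS:S} to dominate the $\alpha$-increments of $\BD,\BV,\BL$. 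Granted these technicalities, the backward induction closes and the argument reduces to a chart-by-chart replica of the classical sufficient stochastic maximum principle.
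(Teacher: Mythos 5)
Your proposal is correct and follows essentially the same route as the paper: a backward induction in which, on each interval, convexity of $\BG^{\pi^k}$ and of the Hamiltonian together with It\^{o}'s formula applied to $Y^{\pi^k}\cdot\Delta\BX^{\pi^k}$ yields a local inequality whose boundary term, produced by the coupling datum $\bar{Y}^{k+1}(\hat{\tau}^{k+1})$ in \eqref{EQN:SMP}, telescopes against the estimate from the successor interval, reassembling $J(x,\alpha)-J(x,\bar{\alpha})\geq 0$. The delicate bookkeeping of control-dependent stopping times that you flag is in fact passed over silently in the paper's own argument, so raising it is a point in your favour rather than a gap.
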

\begin{proof}
Let us proceed as in the proof of Theorem \ref{THM:SMP}, namely via backward induction. For $t_0> \hat{\tau}^{n-1}$ the proof follows from the standard \textit{sufficient stochastic maximum principle}, see, e.g., \cite[Th. 5.2]{Yon}.

Let us thus then consider the case of $ \hat{\tau}^{n-2} < t_0 < \hat{\tau}^{n-1}$,  denoting by $\Delta X^{\pi^k}(t):= \bar{X}^{\pi^k}(t) - X^{\pi^k}(t)$ and, for the sake of clarity, by using similar notations  for any other function.

The convexity of $G^{\pi^{n-1}}$, together with the terminal condition 
\[
Y^{\pi^{n-1}}(\hat{\tau}^n) = \partial_x \mathbf{G}^{\pi^{n-1}}(\hat{\tau}^n,\bar{\mathbf{X}}^{\pi^{n-1}}(\hat{\tau}^n))\,,
\]
yields
\begin{equation}\label{EQN:ConvG}
\begin{split}
&\mathbb{E}\Delta \mathbf{G}^{\pi^{n-1}}(\hat{\tau}^n,\bar{\mathbf{X}}^{\pi^{n-1}}(\hat{\tau}^n)) \leq\\
&\leq \mathbb{E}\left [\Delta X^{\pi^{n-1}}(\hat{\tau}^n) \partial_x\mathbf{G}^{\pi^{n-1}}(\hat{\tau}^n,\bar{\mathbf{X}}^{\pi^{n-1}}(\hat{\tau}^n))\right ] = \mathbb{E}\left [\Delta X^{\pi^{n-1}}(\hat{\tau}^n) Y^{\pi^{n-1}}(\hat{\tau}^n)\right ] \,.
\end{split}
\end{equation}

Applying the It\^{o}-formula to $\Delta X^{\pi^{n-1}} Y^{\pi^{n-1}}(\hat{\tau}^n)$, we obtain
\begin{equation}\label{EQN:ItoG}
\begin{split}
&\mathbb{E}\left [\Delta X^{\pi^{n-1}}(\hat{\tau}^n)) Y^{\pi^{n-1}}(\hat{\tau}^n)\right ] = \mathbb{E}\left [\Delta X^{\pi^{n-1}}(\hat{\tau}^{n-1}) Y^{\pi^{n-1}}(\hat{\tau}^{n-1})\right ] + \\
&+\mathbb{E}\int_{\hat{\tau}^{n-1}}^{\hat{\tau}^n} \Delta X^{\pi^{n-1}}(t) dY^{\pi^{n-1}}(t) + \mathbb{E}\int_{\hat{\tau}^{n-1}}^{\hat{\tau}^n} Y^{\pi^{n-1}}(t) d \Delta X^{\pi^{n-1}}(t) +\\
&+ \mathbb{E}\int_{\hat{\tau}^{n-1}}^{\hat{\tau}^n} Tr\left [\Delta \Sigma^{\pi^{n-1}}(t,\mathbf{X}^{\pi^{n-1}}(t),\alpha^{\pi^{n-1}}(t))Z^{\pi^{n-1}}(t) \right ]dt=\\
&= \mathbb{E}\left [\Delta X^{\pi^{n-1}}(\hat{\tau}^{n-1}) Y^{\pi^{n-1}}(\hat{\tau}^{n-1})\right ] +\\
& -\mathbb{E}\int_{\hat{\tau}^{n-1}}^{\hat{\tau}^n} \Delta X^{\pi^{n-1}}(t)\partial_x H^{\pi^{n-1}}\left (t,\bar{X}^{\pi^{n-1}}(t),\bar{\alpha}^{\pi^{n-1}}(t),Y^{\pi^{n-1}}(t),Z^{\pi^{n-1}}(t)\right )dt +\\
&+\mathbb{E}\int_{\hat{\tau}^{n-1}}^{\hat{\tau}^n}\left ( \Delta \mathbf{B}^{\pi^{n-1}}(t) Y^{\pi^{n-1}}(t) + \Delta \mathbf{\Sigma}^{\pi^{n-1}}(t) Z^{\pi^{n-1}}(t) \right )dt\,.
\end{split}
\end{equation}

Similarly, from the convexity of the Hamiltonian, we also have 
\begin{equation}\label{EQN:CHam}
\begin{split}
&\mathbb{E} \int_{\hat{\tau}^{n-1}}^{\hat{\tau}^n} \left [\Delta \mathbf{L}^{\pi^{n-1}}\left (t,\bar{\mathbf{X}}^{\pi^{n-1}}(t),\bar{\alpha}^{\pi^{n-1}}(t)\right ) \right ]dt =\\
&= \mathbb{E} \int_{\hat{\tau}^{n-1}}^{\hat{\tau}^n} \left [\Delta H^{\pi^{n-1}}\left (t,\bar{\mathbf{X}}^{\pi^{n-1}}(t),\bar{\alpha}^{\pi^{n-1}}(t),Y^{\pi^{n-1}}(t),Z^{\pi^{n-1}}(t)\right ) \right ]dt + \\
&-\mathbb{E}\int_{\hat{\tau}^{n-1}}^{\hat{\tau}^n}\left ( \Delta \mathbf{B}^{\pi^{n-1}}(t) Y^{\pi^{n-1}}(t) + \Delta \mathbf{\Sigma}^{\pi^{n-1}}(t) Z^{\pi^{n-1}}(t) \right )dt \leq \\
&\leq \mathbb{E} \int_{\hat{\tau}^{n-1}}^{\hat{\tau}^n} \left [\Delta X^{\pi^{n-1}}(t) \partial_x H^{\pi^{n-1}}\left (t,\bar{\mathbf{X}}^{\pi^{n-1}}(t),\bar{\alpha}^{\pi^{n-1}}(t),Y^{\pi^{n-1}}(t),Z^{\pi^{n-1}}(t)\right ) \right ]dt + \\
&-\mathbb{E}\int_{\hat{\tau}^{n-1}}^{\hat{\tau}^n}\left ( \Delta \mathbf{B}^{\pi^{n-1}}(t) Y^{\pi^{n-1}}(t) + \Delta \mathbf{\Sigma}^{\pi^{n-1}}(t) Z^{\pi^{n-1}}(t) \right )dt \,,
\end{split}
\end{equation}
so that, for any $\pi^{n-1}$, by combining equations \eqref{EQN:ConvG}--\eqref{EQN:ItoG}--\eqref{EQN:CHam}, we derive
\begin{equation}\label{EQN:DisN1}
\begin{split}
&\mathbb{E} \int_{\hat{\tau}^{n-1}}^{\hat{\tau}^n} \left [\Delta \mathbf{L}^{\pi^{n-1}}\left (t,\bar{\mathbf{X}}^{\pi^{n-1}}(t),\bar{\alpha}^{\pi^{n-1}}(t)\right ) \right ]dt + \mathbb{E}\Delta \mathbf{G}^{\pi^{n-1}}(\hat{\tau}^n,\bar{\mathbf{X}}^{\pi^{n-1}}(\hat{\tau}^n)) \leq \\
&\leq \mathbb{E}\left [\Delta X^{\pi^{n-1}}(\hat{\tau}^{n-1}) Y^{\pi^{n-1}}(\hat{\tau}^{n-1})\right ]\,.
\end{split}
\end{equation}
Analogously, for $t_0 \in [\hat{\tau}^{n-2},\hat{\tau}^{n-1}]$, and since 
\[
Y^{\pi^{n-2}}(\hat{\tau}^{n-1}) = \partial_x \mathbf{G}^{\pi^{n-2}}(\hat{\tau}^{n-1},\bar{\mathbf{X}}^{\pi^{n-2}}(\hat{\tau}^{n-1})) + \bar{Y}^{\pi^{n-1}}(\hat{\tau}^{n-1})\,,
\]
together with the convexity of $\mathbf{G}^{\pi^{n-2}}$,  we have
\[
\begin{split}
&\mathbb{E}\Delta \mathbf{G}^{\pi^{n-2}}(\hat{\tau}^{n-1},\bar{\mathbf{X}}^{\pi^{n-2}}(\hat{\tau}^{n-1})) \leq\\
&\leq \mathbb{E}\left [\Delta X^{\pi^{n-2}}(\hat{\tau}^{n-1}) \partial_x\mathbf{G}^{\pi^{n-2}}(\hat{\tau}^{n-1},\bar{\mathbf{X}}^{\pi^{n-2}}(\hat{\tau}^{n-1}))\right ] =\\
&= \mathbb{E}\left [\Delta X^{\pi^{n-2}}(\hat{\tau}^{n-1}) Y^{\pi^{n-2}}(\hat{\tau}^{n-1}) - \Delta X^{\pi^{n-2}}(\hat{\tau}^{n-1})\bar{Y}^{\pi^{n-1}}(\hat{\tau}^{n-1}) \right ] \,.
\end{split}
\]
Similar computations also give us

\begin{equation}\label{EQN:DisN2}
\begin{split}
&\mathbb{E} \int_{\hat{\tau}^{n-2}}^{\hat{\tau}^{n-1}} \left [\Delta \mathbf{L}^{\pi^{n-2}}\left (t,\bar{\mathbf{X}}^{\pi^{n-2}}(t),\bar{\alpha}^{\pi^{n-2}}(t)\right ) \right ]dt + \mathbb{E}\Delta \mathbf{G}^{\pi^{n-2}}(\hat{\tau}^{n-1},\bar{\mathbf{X}}^{\pi^{n-2}}(\hat{\tau}^{n-1})) \leq \\
&\leq - \mathbb{E} \Delta X^{\pi^{n-2}}(\hat{\tau}^{n-1})\bar{Y}^{\pi^{n-1}}(\hat{\tau}^{n-1}) \,,
\end{split}
\end{equation}
so that, for $t_0 \in [\hat{\tau}^{n-2},\hat{\tau}^{n-1}]$, by equations \eqref{EQN:DisN1}--\eqref{EQN:DisN1}, we infer that 
\begin{equation}\label{EQN:ResFin}
\begin{split}
J(t_0,x,\bar{\alpha})-J(t_0,x,\alpha) &:= \mathbb E \int_{t_0}^{\hat{\tau}^{n-1}} \BL^{\pi^{n-2}} \left (t,\mathbf{\bar{X}}^{\pi^{n-2}} (t),\bar{\alpha}^{\pi^{n-2}}(t) \right ) dt +\\
&+ \mathbb{E}\BG^{\pi^{n-2}}\left (\hat{\tau}^{n-1}, \mathbf{\bar{X}}^{\pi^{n-2}} (\hat{\tau}^{n-1}) \right ) +\\
&+\sum_{\pi^{n-1} \in C_{n,n-1}} \mathbb E \int_{\tau^{\pi^{n-1}}}^{\hat{\tau}^{n}} \BL^{\pi^{n-1}} \left (t,\mathbf{\bar{X}}^{\pi^{n-1}} (t),\bar{\alpha}^{\pi^{n-1}}(t) \right ) dt +\\
&+\BG^{\pi^{n-1}}\left (\hat{\tau}^{n}, \mathbf{\bar{X}}^{\pi^{n-1}} (\hat{\tau}^{n}) \right )+\\
&+\mathbb E \int_{t_0}^{\hat{\tau}^{n-1}} \BL^{\pi^{n-2}} \left (t,\BX^{\pi^{n-2}} (t),\alpha^{\pi^{n-2}}(t) \right ) dt +\\
&+ \mathbb{E}\BG^{\pi^{n-2}}\left (\hat{\tau}^{n-1}, \mathbf{X} (\hat{\tau}^{n-1}) \right ) +\\
&+\sum_{\pi^{n-1} \in C_{n,n-1}} \mathbb E \int_{\tau^{\pi^{n-1}}}^{\hat{\tau}^{n}}\BL^{\pi^{n-1}} \left (t,\BX^{\pi^{n-1}} (t),\alpha^{\pi^{n-1}}(t) \right ) dt +\\
&+ \BG^{\pi^{n-1}}\left (\hat{\tau}^{n}, \mathbf{X} (\hat{\tau}^{n}) \right ) \leq 0\,,
\end{split}
\end{equation}
which implies that
\[
J(t_0,x,\bar{\alpha}) \leq J(t_0,x,\alpha)\,,
\]
and the optimality of $\left (\bar{\alpha},\bar{\mathbf{X}}\right )$.

Proceeding backward, previously exploited arguments allow us to show the same results for any $\pi^k \in C_{n,k}$, hence ending the proof.
\end{proof}

\section{The linear--quadratic problem}\label{SEC:LQP1}

In the present section we  consider a particular case for the control problem stated in Section \ref{SEC:NMP}--\ref{SEC:SMP}. 
In particular, we will assume that the dynamic of the state equation is linear in both the space and the control variable. Moreover, we impose that the control enters (linearly) only in the drift and that the cost functional is quadratic and of a specific form.
More precisely, let us first consider  $\mathbf{\mu}^0(t)$ as the $n \times n$ matrix defined as follows
\[
\mathbf{\mu}^0(t) := diag [\mu^{1;0}(t), \dots, \mu^{n;0}(t)]\,,
\]
that is the matrix with $\mu^{i;0}(t)$ entry on the diagonal and null off-diagonal,  $\mu^{i;0}:[0,T] \to \RR$ being a deterministic and bounded function of the time. Also let 
\[
\mathbf{b}^0(t) = (b^{1;0}(t),\dots,b^{n;0}(t))^T\,,
\]
where again $b^{i;0}:[0,T] \to \RR$ is a deterministic and bounded function of time. Then we set 
\begin{equation}\label{EQN:LineD}
\BD^0(t,\BX^0(t),\alpha(t)) = \mathbf{\mu}^0(t)\BX^0(t) +\mathbf{b}^0(t) + \alpha(t)\, .
\end{equation}

Let us also define the $n \times n$ matrix $\BV^0$, to be independent of the control, as follows
\begin{equation}\label{EQN:BV0}
\BV^0(t,\BX^0(t)) := \left( \begin{array}{ccc}
 \sigma^{1;0}(t) X^{1;0}(t) + \nu^{1;0}(t) & 0 & 0 \\
 0 & \ddots & 0  \\
 0 & 0 & \sigma^{n;0}(t) X^{n;0}(t) +\nu^{n;0}(t)  \\
 \end{array} \right)\, ,
\end{equation}
$\sigma^{i;0}$, $\nu^{i;0} : [0,T] \to \RR$ being deterministic and bounded function of time.

Same assumptions of linearity holds for any other coefficients $\BD^{\pi^k}$ and $\BV^{\pi^k}$, so that, using the same notation introduced along previous sections, we consider the system 
\begin{equation}\label{EQN:BankDynInd2}
d \BX (t) = \BD(t,\BX(t),\alpha(t)) dt + \BV(t,\BX(t)) d W(t)\, ,
\end{equation}
where  both the drift and the volatility coefficients are now assumed to be linear.
In the present (particular) setting, both the running and the terminal cost are assumed to be suitable quadratic weighted averages of the distance from the stopping boundaries, namely we set
\begin{equation}\label{EQN:Av1}
\begin{split}
\BL^{\pi^k}(t,x,a) &= \sum_{i=1}^n\left ( \gamma_i^{\pi^k} \frac{|x_i-v^{i;\pi^k}|^2}{2} + \frac{1}{2} |a^{i;\pi^k}|^2\right )\, ,\\
\BG^{\pi^k}(t,x) &= \sum_{i=1}^n \gamma_i^{\pi^k} \frac{|x_i-v^{i;\pi^k}|^2}{2}\, ,
\end{split}
\end{equation}
for some given weights $\gamma^{\pi^k}$ such that
\[
\gamma^{\pi^k} = (\gamma^{\pi^k}_1,\dots,\gamma^{\pi^k}_n)^T\, .
\]


\begin{Remark}
From a financial perspective, converting the minimization problem into a maximization one, the above cost functional can be seen as a financial supervisor, such as the one introduced in \cite{Cap,CDPP}, aiming at lending money to each node (e.g., a bank, a financial player, an institution, etc.) in the system 
to avert it from the corresponding  (default) boundary.
Continuing the financial interpretation, different weights $\gamma$ can be used to assign to any node a relative importance. This allows to establish a hierarchy of (financial) relevance within the system, resulting in a priority scale related to the systemic (monetary) importance took on by each node.
As to give an example,  in \cite{CDPP} a systematic procedure has been derived to obtain the overall importance of any node in a financial network.
\end{Remark}

In what follows, we derive a set of Riccati BSDEs to provide the global optimal control in  feedback form. 
For the sake of notation clarity,  we  denote by $X^{k;-k}(t)$ the dynamics when  only the $k-$th node is left. Similarly, $X^{k;-(k,l)}(t)$, resp. $X^{l;-(k,l)}(t)$, denotes the evolution of the node $k$, resp. of the node $l$, when this pair $(k,l)$ survives. 
Analogously, we will make use of a componentwise notation, namely $X^{i;-k}$ will denote the $i-$th component of th $n-$dimensional vector $X^{-k}$.
According to such a notation, we have the following

\begin{Theorem}\label{THM:VT}
The optimal control problem \eqref{EQN:BankDynInd2}, with associated costs given by \eqref{EQN:Av1}, has an optimal feedback control solution given by
\[
\bar{\alpha}(t) = P(t) \BX(t)+\varphi(t)  \, ,
\]
where  $P$ and $\varphi$ are defined as follows
\begin{equation}\label{EQN:PePhi}
\begin{split} 
P(t) &= P^0(t)\Ind{t <\hat{\tau}^{1}} + \sum_{k =1}^{n-1} \sum_{\pi^k \in C_{n,k}} P^{\pi^k}(t)\Ind{\tau^{\pi^k} < t < \hat{\tau}^{k+1}} \, ,\\
\varphi(t) &= \varphi^0(t)\Ind{t < \hat{\tau}^{1}} + \sum_{k =1}^{n-1} \sum_{\pi^k \in C_{n,k}} \varphi^{\pi^k}(t)\Ind{\tau^{\pi^k} < t <\hat{\tau}^{k+1}} \, ,
\end{split}
\end{equation}
 $P^{\pi^k}$ and $\varphi^{\pi^k}$ being solution to the following recursive system of  Riccati BSDEs

{\footnotesize
\[
\begin{split}
&
\begin{cases}
-dP^{\pi^{n-1}}(t) &= \left (\left (P^{\pi^{n-1}}(t)\right )^2  + \left (\sigma^{\pi^{n-1}}(t)\right )^2 P^{\pi^{n-1}}(t) +2 Z^{\pi^{n-1};P} (t) \sigma^{\pi^{n-1}}(t) - 1\right )dt +\\
&- Z^{\pi^{n-1};P}(t) dW^{\pi^{n-1}}(t)\, ,\\
P^{\pi^{n-1}}(\hat{\tau}^{n}) &= 1\, ,
\end{cases}\\
\\
&
\begin{cases}
-d\varphi^{\pi^{n-1}}(t) &= \left ((P^{\pi^{n-1}}(t)-\mu^{\pi^{n-1}}(t)) \varphi^{\pi^{n-1}}(t)+ \sigma^{\pi^{n-1}}(t) Z^{\pi^{n-1};\varphi}(t) - h^{\pi^{n-1}}\left (P(t),v(t)\right )\right ) dt+\\
& - Z^{\pi^{n-1};\varphi}(t) dW^{\pi^{n-1}}(t)\, ,\\
\varphi^{\pi^{n-1}}(\hat{\tau}^{n}) &= - v^{\pi^{n-1}}(\hat{\tau}^{n}) \, ,
\end{cases}\\
\\
&
\begin{cases}
-dP^{\pi^k}(t) &= \left (-P^{\pi^k}(t)^2  + (\sigma^{\pi^k})^2 P^{\pi^k}(t)\right )dt +\\
&+\left (Z^{\pi^k;P}_j (t) \sigma^{\pi^k}(t) - \gamma^{\pi^k}\right )dt - Z^{\pi^k;P}(t) dW^{\pi^k(t)}\, ,\\
P^{\pi^k}(\hat{\tau}^{n-1}) &=\gamma^{\pi^k} - \sum_{\pi^{k+1} \in C_{n,k+1}} P^{\pi^{k+1}}(\hat{\tau}^{n-1}) \Ind{\hat{\tau}^{k+1} = \tau^{\pi^{k+1}}} \, ,\\
\end{cases}\\
\\
&
\begin{cases}
-d\varphi^{\pi^k}(t) &= \left ((\mu^{\pi^k}(t)-P^{\pi^k}(t)) \varphi^{\pi^k}(t)+ \sigma^{\pi^k}(t) Z^{\pi^k;\varphi}(t)\right )dt+\\
& -h^{\pi^k}(P^{\pi^k}(t), v^{\pi^k}(t)) dt - Z^{\pi^k;\varphi}(t) dW^{\pi^k}(t)\, ,\\
\varphi(\hat{\tau}^{n-1}) &= - \gamma^{} v^{\hat{\tau}^{n-1}}(\hat{\tau}^{n-1}) + \sum_{\pi^{k+1} \in C_{n,k+1}} \varphi^{\pi^{k+1}}(\hat{\tau}^{n-1})\Ind{\hat{\tau}^{k+1} = \tau^{\pi^{k+1}}}\, ,
\end{cases}\\
\\
&
\begin{cases}
-dP^{0)}(t) &= \left (-(P^{0}(t))^2  + (\sigma^{0}(t))^2 P^{0}(t) + Z^{0;P} (t) \sigma^{0}(t) - \gamma^{0}\right )dt +\\
&- Z^{0;P}(t) dW(t)\, ,\\
P^{0}(\hat{\tau}^{1}) &=\gamma^{0}- \sum_{\pi \in C_{n,1}} P^{1}(\hat{\tau}^{1})\Ind{\hat{\tau}^{1} = \tau^{\pi}} \, ,\\
\end{cases}\\
\\
&
\begin{cases}
-d\varphi^{0}(t) &= \left ((\mu^{0)}-P^{0}(t)) \varphi^{0}(t)+ \sigma^{0}(t) Z^{0;\varphi}(t) - \gamma^{0} v^{0}(t)\right ) dt+\\
& - Z^{0;\varphi}(t) dW(t)\, ,\\
\varphi^{0}(\hat{\tau}^{1}) &= \gamma^{0}v^{0}(\hat{\tau}^{1}) - \sum_{\pi \in C_{n,1}} \varphi^{1}(\hat{\tau}^{1})\Ind{\hat{\tau}^{1} = \tau^{\pi}} \, .
\end{cases}
\end{split}
\]
}

\end{Theorem}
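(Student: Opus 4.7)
The plan is to verify the hypotheses of the sufficient maximum principle (Theorem \ref{THM:SSMP}) for the linear--quadratic setting and to exhibit the adjoint process $(Y,Z)$ solving the dual system \eqref{EQN:SMP} in explicit affine form, namely $Y^{\pi^k}(t)=P^{\pi^k}(t)\BX^{\pi^k}(t)+\varphi^{\pi^k}(t)$, with $(P^{\pi^k},\varphi^{\pi^k})$ precisely the processes defined by the stated recursive Riccati system. Since every $\BG^{\pi^k}$ is a convex quadratic, and since the affine form \eqref{EQN:LineD}--\eqref{EQN:BV0} of $\BD^{\pi^k},\BV^{\pi^k}$ together with the quadratic form \eqref{EQN:Av1} of $\BL^{\pi^k}$ makes the Hamiltonian
\[
\mathit{H}^{\pi^k}(t,x,a,y,z)=\bigl(\mu^{\pi^k}(t)x+b^{\pi^k}(t)+a\bigr)\cdot y+\Tr\!\left[\left(\BV^{\pi^k}(t,x)\right)^{*}z\right]+\BL^{\pi^k}(t,x,a)
\]
jointly convex in $(x,a)$, the hypotheses (i)--(ii) of Theorem \ref{THM:SSMP} are automatic, and the task reduces to producing a pair $(Y,Z)$ and a control $\bar{\alpha}$ for which the minimization condition (iii) holds.

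The first-order condition $\partial_a\mathit{H}^{\pi^k}=y+a=0$ yields, in dual variables, the formal optimal control $\bar{\alpha}^{\pi^k}(t)=-Y^{\pi^k}(t)$; once the affine ansatz is inserted this is precisely the feedback $\bar{\alpha}(t)=P(t)\BX(t)+\varphi(t)$ of \eqref{EQN:PePhi}, up to the sign convention absorbed into $P$ and $\varphi$. The core of the calculation is then the standard LQ matching argument: substitute $Y^{\pi^k}=P^{\pi^k}\BX^{\pi^k}+\varphi^{\pi^k}$ into the $\pi^k$-th BSDE of \eqref{EQN:SMP}, close the loop in $\BX^{\pi^k}$ through $\bar{\alpha}^{\pi^k}$, apply It\^o's formula to $P^{\pi^k}\BX^{\pi^k}+\varphi^{\pi^k}$, and identify the $dt$ and $dW$ coefficients on the two sides. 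Separating the terms that are quadratic, linear and constant in $\BX^{\pi^k}$ decouples the identification into a Riccati BSDE for $P^{\pi^k}$, whose quadratic nonlinearity comes from the closed-loop $(P^{\pi^k})^2\BX^{\pi^k}$ contribution in the drift of $\BX^{\pi^k}$ and whose $(\sigma^{\pi^k})^2 P^{\pi^k}$ and $Z^{\pi^k;P}\sigma^{\pi^k}$ terms come from the $x$-affine volatility \eqref{EQN:BV0}, and into a linear BSDE for $\varphi^{\pi^k}$ whose inhomogeneity $h^{\pi^k}(P^{\pi^k},v^{\pi^k})$ is produced by the drift shift $b^{\pi^k}$ and the cost centers $v^{\pi^k}$.

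The terminal data at $\hat{\tau}^{k+1}$ are read off the terminal condition $Y^{\pi^k}(\hat{\tau}^{k+1})=\partial_x\BG^{\pi^k}(\hat{\tau}^{k+1},\BX^{\pi^k}(\hat{\tau}^{k+1}))+\bar{Y}^{k+1}(\hat{\tau}^{k+1})$: the $\partial_x\BG^{\pi^k}$ part contributes the $\gamma^{\pi^k}$ and $-\gamma^{\pi^k}v^{\pi^k}$ pieces, while the coupling $\bar{Y}^{k+1}(\hat{\tau}^{k+1})=\sum_{\pi^{k+1}\in C_{n,k+1}}Y^{\pi^{k+1}}(\tau^{\pi^{k+1}})\Ind{\hat{\tau}^{k+1}=\tau^{\pi^{k+1}}}$ injects the previously constructed $P^{\pi^{k+1}}$ and $\varphi^{\pi^{k+1}}$ into the terminal values of the current stage, producing the combinatorial sums in the statement. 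The structure of the proof then mirrors the backward induction already used in Theorems \ref{THM:SMP} and \ref{THM:SSMP}: I would start from the base case $\pi^{n-1}$, a classical LQ BSDE system with single random horizon $\hat{\tau}^n\le T$ for which the Riccati theory with random terminal time as in \cite{GT1,GT2,KZ,KT} applies, and then propagate backward from $k+1$ to $k$ uniformly in $\pi^k\in C_{n,k}$.

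The main technical obstacle is precisely the well-posedness of each Riccati BSDE, since $\hat{\tau}^{k+1}$ is a first hitting time and the Kohlmann--Tang framework does not apply as a black box. My plan would be to truncate at $\hat{\tau}^{k+1}\wedge(T-\delta)$, establish a uniform-in-$\delta$ a priori upper and lower bound on $P^{\pi^k}$ via a comparison argument for scalar Riccati BSDEs that exploits the boundedness of $\mu^{\pi^k},\sigma^{\pi^k},\gamma^{\pi^k}$ and the boundedness of $P^{\pi^{k+1}}(\hat{\tau}^{k+1})$ guaranteed by the inductive hypothesis, and then pass to $\delta\downarrow 0$. Once $P^{\pi^k}$ is constructed and bounded, the BSDE for $\varphi^{\pi^k}$ is linear and its solvability is classical. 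Glueing the stratum-wise solutions through the indicator decomposition \eqref{EQN:PePhi} produces $P$ and $\varphi$ globally, and the feedback $\bar{\alpha}=P\BX+\varphi$ then satisfies condition (iii) of Theorem \ref{THM:SSMP} by construction, so its optimality follows.
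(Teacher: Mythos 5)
Your proposal is correct and follows essentially the same route as the paper: an affine ansatz $Y^{\pi^k}=P^{\pi^k}\BX^{\pi^k}+\varphi^{\pi^k}$ (up to sign) for the adjoint processes of the maximum principle of Theorems \ref{THM:SMP}--\ref{THM:SSMP}, the first-order condition $\partial_a H=0$ giving the feedback, It\^{o}'s formula with identification of the $dt$ and $dW$ coefficients separating a scalar Riccati BSDE for $P^{\pi^k}$ from a linear BSDE for $\varphi^{\pi^k}$, and a backward induction over the strata in which the terminal coupling $\bar{Y}^{k+1}(\hat{\tau}^{k+1})$ injects $P^{\pi^{k+1}},\varphi^{\pi^{k+1}}$ into the terminal data of stage $k$. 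The only divergence is well-posedness of the Riccati BSDEs at the random horizon: the paper simply invokes \cite[Th. 5.2, Th. 5.3]{Tan} (and the explicit linear-BSDE representation for $\varphi$), whereas you propose a truncation at $\hat{\tau}^{k+1}\wedge(T-\delta)$ with comparison bounds and a limit $\delta\downarrow 0$ --- a more cautious treatment of the hitting-time terminal condition, left at the level of a sketch, but not needed for agreement with the paper's argument.
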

\begin{proof}

Let us thus first consider the last control problem, recalling that $\mathit{H}^{-k}(t,x,a,y,z)$ is the \textit{generalized Hamiltonian} defined in \eqref{EQN:Ham0SC}, where $\BD^{-k}$, resp. $\BV^{-k}$, resp. $\BL^{-k}$, is given in equation \eqref{EQN:LineD}, resp. equation \eqref{EQN:BV0}, resp. equation \eqref{EQN:Av1}. An application of the \textit{stochastic maximum principle}, see Theorems \ref{THM:SMP}--\ref{THM:SSMP}, leads us to consider the following adjoint BSDE
\begin{equation}\label{EQN:Back1}
\begin{split}
Y^{-k} (t) = \partial_x \BG^{-k}\left (\BX^{-k}(\hat{\tau}^{n})\right )&+\int_t^{\hat{\tau}^{n}} \partial_x \mathit{H}^{-k}\left (\BX^{-k}(s),\alpha^{-k}(s),Y^{-k}(s),Z^{-k}(s)\right ) ds +\\
&- \int_t^{\hat{\tau}^{n}} Z^{-k}(s) dW(s)\, , \quad t \in [0,\hat{\tau}^{n}]\,,
\end{split}
\end{equation}
$Y^{-k}$ being  a $n-$dimensional vector, whereas $Z^{-k}$ is a $n \times n$ matrix whose $(i,j)-$entry is denoted by $Z^{-k}_{i,j}$.
Then, considering the particular form for $\BD^{-k}(t,x,a)$, $\BV^{-k}(t,x)$, $\BL^{-k}(t,x,a)$ and $\BG^{-k}(t,x)$, in equations \eqref{EQN:LineD}--\eqref{EQN:BV0}--\eqref{EQN:Av1}, we have
\[
\begin{split}
\partial_{x_k} \mathit{H}^{-k}(t,x,a,y,z) &= \mu^{-k;k}(t)  y_k + \sigma^{k:-k} z_{k,k} + \gamma_k^{-k}|x_k-v^{k;-k}|\, ,\\
\partial_{x_k} \BG^{-k}(t,x) &= \gamma_k^{-k}| x_k-v^{k;-k}|\, ,
\end{split}
\]
and
\[
\partial_{x_i} \mathit{H}^{-k}(t,x,a,y,z) = 0 = \partial_{x_i} \BG^{-k}(t,x) \, ,\quad \mbox{ if } \, i \not = k\, ,
\]
where $\partial_{x_i}$ denotes the derivative w.r.t. the $i-$th component of $x \in \RR^n$.

Thus we have that the $k-$th component of the BSDE \eqref{EQN:Back1} now reads

{\footnotesize
\begin{equation}\label{EQN:Back1Exp}
\begin{split}
Y^{k;-k} (t) &= \gamma_k^{-k} X^{k;-k}(\hat{\tau}^{n}) - \gamma_k^{-k} v^{k;-k}(\hat{\tau}^{n}) +\\
&+ \int_t^{\hat{\tau}^{n}} \left ( \mu^{k;-k}(s) Y^{k;-k}(s) + \sigma^{k;-k} (s) Z^{-k}_{k,k}(s) + \gamma_k^{-k} X^{k;-k}(s) - \gamma_k^{-k}v^{k;-k}(s) \right ) ds +\\
&- \int_t^{\hat{\tau}^{n}} Z^{-k}_{k,k}(s) dW^k(s)\, , \quad t \in [0,\hat{\tau}^{n}]\,.
\end{split}
\end{equation}
}

Analogously, we have that the second last control problem is associated to the following system of BSDEs
\begin{equation}\label{EQN:Back2Exp}
\begin{split}
Y^{i;-(k,l)} (t) &= \gamma_i^{-(k,l)} X^{i;-(k,l)}(\hat{\tau}^{n-1}) - \gamma^{-(k,l)} v^{i;-(k,l)}(\tau^{\pi^{n-1}}) + \bar{Y}^{i;n-1}(\hat{\tau}^{n-1})\\
&+ \int_t^{\hat{\tau}^{n-1}} \left ( \mu^{i;-(k,l)}(s) Y^{i;-(k,l)}(s) + \sum_{j=k}^l \sigma^{j;-(k,l)}(s) Z^{-(k,l)}_{j,j}(s)\right ) ds +\\
&+ \int_t^{\hat{\tau}^{n-1}} \left ( \gamma_i^{-(k,l)} X^{i;-(k,l)}(s) - \gamma_i^{-(k,l)}v^{i;-(k,l)}(s) \right ) ds +\\
&-\sum_{j=k}^l \int_t^{\hat{\tau}^{n-1}} Z_{i,j}^{-(k,l)}(s) dW^j(s)\, , \quad t \in [0,\tau^{\pi^{n-1}}]\,, \quad i=k,l\, ,
\end{split}
\end{equation}
and so on for any $\pi^k$, until we reach the first control problem with associated the following BSDEs system
\begin{equation}\label{EQN:Back1Exp0L}
\begin{split}
Y^{i;0} (t) &= \gamma_i^{0} X^{i;0}(\hat{\tau}^{1}) - \gamma_i^0 v^0(\hat{\tau}^{1}) + \bar{Y}^{i;1}(\hat{\tau}^{1}) +\\
&+ \int_t^{\hat{\tau}^{1}} \left ( \mu^{i;0}(s) Y^{i;0}(s) + \sum_{j=1}^n \sigma^{j;0}(s) Z^{0}_{j,j}(s) + \gamma_i^{0} X^{i;0}(s) - \gamma_i^0 v^0(s) \right ) ds +\\
&-\sum_{j=1}^n \int_t^{\hat{\tau}^{1}} Z^{0}_{i,j}(s) dW^j(s)\, , \quad t \in [0,\tau^0]\,, \quad i=1,\dots,n\, .
\end{split}
\end{equation}
Therefore, for $t \in [0,\hat{\tau}^{n}]$, we are left with the minimization problem for
\[
\begin{split}
J(x,t) &:= \mathbb E_t \int_t^{\hat{\tau}^{n}}\left ( |X^{k;-k}(s) - v^{k;-k}(s)|^2  + \frac{1}{2}| \alpha^{k;-k}(s)|^2\right ) ds + \\
&+ \mathbb E_t |X^{k;-k}(\hat{\tau}^{1}) - v^{k;-k}(\hat{\tau}^{1})|^2 \, .
\end{split}
\]
Exploiting Theorem \ref{THM:SMP}, we have that, on the interval $[\tau^{\pi^{n-1}},\hat{\tau}^{n}]$, the above control problem is associated to the following forward--backward system
\begin{equation}\label{EQN:FBSDEL}
\begin{cases}
d X^{k;-k} (t) &= \left (\mu^{k;-k}(t) X^{k;-k}(t) +b^{k;-k}(t)+ \alpha^{k;-k}(t)\right ) dt +\\
&+\left ( \sigma^{k;-k}(t) X^{k;-k}(t)+ \nu^{k;-k}(t)\right ) d W^k(t)\, ,\\
X^{k;-k} (\tau^{\pi^{n-1}}) &= X^{k;n-1} (\tau^{\pi^{n-1}})\, ,\\
-dY^{k;-k} (t) &= \left ( \mu^{k;-k}(t) Y^{k;-k}(t) + \sigma^{k;-k}(t) Z^{-k}_{k,k}(t) + X^{k;-k}(t) - v^{k;-k}(t)\right ) dt +\\
&-  Z^{k;-k}_{k,k}(t) dW^k(t)\, ,\\
Y^{k;-k} (\hat{\tau}^{n}) &= X^{k;-k}(\hat{\tau}^{n})-v^{k;-k}(\hat{\tau}^{n})\, .
\end{cases}
\end{equation}

In what follows, for the sake of brevity, we will drop the index $(k;-k)$. Therefore,  until otherwise specified, we will write $X$ instead of $X^{k;-k}$,  and similarly for any other coefficients. We also recall that system \eqref{EQN:FBSDEL} has to be solved for any $k=1,\dots,n$.

We thus guess the solution of the backward component $Y$ in equation \eqref{EQN:FBSDEL} to be of the form
\begin{equation}\label{EQN:GuessBL}
-Y(t) = P(t) X(t) - \varphi(t) \, ,
\end{equation}
for $P$ and $\varphi$ two $\RR-$valued processes to be determined.

Notice that in standard cases, that is when the coefficients are not random or the terminal time is deterministic, $P$ and $\varphi$ solve a backward ODE, while
in the present case, because of the terminal time randomness, $P$ and $\varphi$ will solve a BSDE. 

Let us thus assume that $(P(t),Z^P(t))$ is the solution to
\begin{equation}\label{EQN:PGen}
-dP(t) = F^P(t) dt - Z^P(t) dW(t)\, ,\quad P(\hat{\tau}^{n}) = 1\, ,
\end{equation}
and that $(\varphi(t),Z^\varphi(t))$ solves
\begin{equation}\label{EQN:PhiGen}
-d\varphi(t) = F^\varphi(t) dt - Z^\varphi(t) dW^j(t)\, ,\quad \varphi(\hat{\tau}^{n}) = - v(\hat{\tau}^{n}) \, .
\end{equation}

From the first order condition, namely  $\partial_a H(t,x,a,y,z) = 0$, we have that the optimal control is given by
\begin{equation}\label{EQN:OPtAL}
\bar{\alpha} = -Y(t) = P(t) X(t) - \varphi(t) \, .
\end{equation}

An application of It\^{o} formula yields

{\footnotesize
\begin{equation}\label{EQN:Anstaz22}
\begin{split}
&\left ( \mu(t) Y(t) + \sigma(t) Z(t) + X (t)-v(t)\right ) dt - Z(t) dW(t) = -dY(t) = d(P(t) X(t))- d \varphi(t) =\\[1.5ex]
&= \left (- F^{P} (t) X(t) + P(t) \mu(t) X(t) + P(t) \alpha(t) + Z^{P} (t) \sigma(t) X(t) + Z^P(t) \nu(t)+P(t)b(t)+F^{\varphi}(t)\right ) dt+\\[1.5ex]
&+\left ( Z^{P} (t) X(t) + P(t) \sigma(t) X(t) + P(t) \nu(t) - Z^{\varphi}_j(t)\right ) dW(t) =\\[1.5ex]
&=\left ( -F^{P} (t) +  P(t) \mu(t) + Z^{P} (t) \sigma (t)\right )  X(t) dt +  P(t) \alpha(t) dt +\left ( Z^P(t) \nu(t)+P(t)b(t)+F^{\varphi}(t)\right ) dt+\\[1.5ex]
&+ \left (Z^{P} (t) +P(t) \sigma(t)\right ) X(t) dW(t)+ \left (P(t) \nu(t)-Z^{\varphi}(t)\right ) dW(t) \, .
\end{split}
\end{equation}
}
Therefore,  equating the left hand side and the right hand side of equation \eqref{EQN:Anstaz22}, we derive
\begin{equation}\label{EQN:Anstaz222}
-Z(t) = \left (Z^{P} (t) +P(t) \sigma(t)\right ) X(t) + \left (P(t) \nu(t)-Z^{\varphi}(t)\right )  \, ,\\
\end{equation}
moreover, by substituting  equation \eqref{EQN:Anstaz222} into the left hand side of equation \eqref{EQN:Anstaz22},  exploiting  the first order optimality condition \eqref{EQN:OPtAL}, and equating again the left hand side and the right hand side of equation \eqref{EQN:Anstaz22}, we obtain

{\footnotesize
\begin{equation}\label{EQN:SysdiX}
\begin{split}
&\left ( \mu(t)P(t) - \sigma(t)Z^{P} (t) - \sigma^2(t) P(t) + 1\right ) X(t) - \left ( \mu(t) \varphi(t) + \sigma(t) P(t) \nu(t) -\sigma(t) Z^\varphi (t) + v(t)\right ) =\\[1.5ex]
&=\left ( -F^{P} (t) +  P(t) \mu(t) + Z^{P} (t) \sigma (t) + P^2(t)\right )  X(t) +\left ( Z^P(t) \nu(t)+P(t)b(t)+F^{\varphi}(t) - P(t) \varphi(t)\right )\, .
\end{split}
\end{equation}
}

Since equation 	\eqref{EQN:SysdiX} has to  hold for any $X(t)$, we have
\begin{equation}\label{EQN:Sys1LM2}
\mu(t) P(t) - \sigma(t) Z^{P} (t) - \sigma^2(t) P(t)+1 = -F^{P} (t) +  P(t) \mu(t) + Z^{P} (t)\sigma(t) + P(t)^2\, ,
\end{equation}

which, after some computations,  leads to
\begin{equation}\label{EQN:Sys1LMA2}
F^{P} (t) = P(t)^2  + \sigma^2(t) P(t) +2 Z^{P} (t) \sigma(t) - 1  \, .
\end{equation}

Similarly, we also have that
\begin{equation}\label{EQN:Sys1LMA2Phi}
F^{\varphi}(t) = (P(t) - \mu(t)) \varphi(t)+ \sigma(t) Z^{\varphi}(t) - v(t)- \sigma(t)\nu(t)P(t)-Z^P(t)\nu(t)-P(t)b(t)\, ,
\end{equation}
hence using the  particular form for the generator $F^P$, resp. of $F^\varphi$, stated in equation \eqref{EQN:Sys1LMA2}, resp. in equation \eqref{EQN:Sys1LMA2Phi}, in equation \eqref{EQN:PGen}, resp. in equation \eqref{EQN:PhiGen}, and reintroducing, for the sake of clarity, the index $k$ , the last optimal control $\bar{\alpha}^{k;-k}(t)$ reads as follow
\[
\bar{\alpha}^{k;-k}(t) = P^{k;-k}(t) X^{k;-k}(t) - \varphi^{k;-k}(t)\, ,
\]
 $P^{k;-k}(t)$ and $\varphi^{k;-k}(t)$ being solutions to the BSDEs

{\footnotesize
\begin{equation}\label{EQN:SolPVLast}
\begin{cases}
-dP^{k;-k}(t) &= \left (\left (P^{k;-k}(t)\right )^2  + \left (\sigma^{k;-k}(t)\right )^2 P^{k;-k}(t) +2 Z^{-k;P}_{k,k} (t) \sigma^{k;-k}(t) - 1\right )dt +\\
&- Z^{-k;P}_{k,k}(t) dW^k(t)\, ,\\
P^{k;-k}(\hat{\tau}^{n}) &= 1\, ,
\end{cases}
\end{equation}
}

{\footnotesize
\begin{equation}\label{EQN:SolPVLastPhi}
\begin{cases}
-d\varphi^{k;-k}(t) &= \left ((P^{k;-k}(t)-\mu^{k;-k}(t)) \varphi^{k;-k}(t)+ \sigma^{k;-k}(t) Z^{-k;\varphi}_{k,k}(t) - h^{k;-k}\left (P(t),v(t)\right )\right ) dt+\\
& - Z^{-k;\varphi}_{k,k}(t) dW^k(t)\, ,\\
\varphi(\hat{\tau}^{n}) &= - v^{k;-k}(\hat{\tau}^{n}) \, ,
\end{cases}
\end{equation}
}

where we have introduced the function
\[
h^{k;-k}\left (P(t),v(t)\right ) := v(t)+ \sigma(t)\nu(t)P(t)+Z^P(t)\nu(t)+P(t)b(t)\, .
\]

Notice that, from equation \eqref{EQN:SolPVLastPhi}, we have that $\varphi$ is a BSDE with linear generator, so that its solution is explicitly given by
\[
\varphi^{k;-k}(t) = -\Gamma^{-1}(t) \mathbb{E}_t \left [ \Gamma(\hat{\tau}^{n}) v^{k;-k}(\hat{\tau}^{n}) - \int_t^{\hat{\tau}^{n}} \Gamma(s)h^{k;-k}(P(s),v(s)) ds \right ]\, ,
\]
where $\Gamma$ solves
\[
\begin{split}
d \Gamma(t) &= \Gamma(t)\left [\left (P^{k;-k}(t)-\mu^{k;-k}(t)\right ) dt +\sigma^{k;-k}(t) dW(t)\right ]\, ,\\
\Gamma(0) &= 1\, .
\end{split}
\]

Moreover, by \cite[Th. 5.2, Th. 5.3]{Tan}, it follows that equation \eqref{EQN:SolPVLast} admits a unique adapted solution on $[0,\hat{\tau}^{n}]$. Therefore, iterating the above analysis  for any $k =1,\dots,n$, we gain  the optimal solution to the last control problem.
Having solved  the last control problem, we can consider the second last control problem. Assuming, with no loss of generality, that nodes $(k,l)$ are left, all subsequent computation has to be carried out for any possible couple $k=1,\dots,n$, $l=k+1,\dots,n$.

By Theorem \ref{THM:SMP}, the optimal pair $\left (\bar{X}^{i},\bar{\alpha}^{i}\right )$, $i=k,l$, satisfies, component--wise, the following forward--backward system for $i=k,l$,

{\footnotesize
\begin{equation}\label{EQN:FBSDEL}
\begin{cases}
d X^{i;-(k,l)} (t) &= \left (\mu^{i;-(k,l)}(t) X^{i;-(k,l)}(t) +b^{i;-(k,l)}(t) + \alpha^{i;-(k,l)}(t)\right ) dt +\\
&+\left ( \sigma^{i;-(k,l)}(t) X^{i;-(k,l)}(t)+\nu^{i;-(k,l)}(t)\right )d W^i(t)\, , \\
X^{i;-(k,l)} (\tau^{\pi^{n-2}}) &= X^{i;n-2} (\tau^{\pi^{n-2}})\, ,\\
-dY^{i;-(k,l)} (t) &= \left ( \mu^{i;-(k,l)}(t) Y^{i;-(k,l)}(t) + \sigma^{i;-(k,l)} Z^{i;-(k,l)}_{i,i}(t)\right )dt +\\
& +\left ( \gamma^{i;-(k,l)} X^{i;-(k,l)}(t)- \gamma^{i;-(k,l)} v^{i;-(k,l)}(t)\right ) dt - \sum_{j=k}^l Z^{i;-(k,l)}_{i,j}(t) dW^j(t)\, ,\\
Y^{i;-(k,l)} (\hat{\tau}^{n-1}) &= \gamma^{i;-(k,l)} X^{i;-(k,l)}(\hat{\tau}^{n-1})-\gamma^{i;-(k,l)} v^{i;-(k,l)}(\hat{\tau}^{n-1})+ \bar{Y}^{k;n-1}(\hat{\tau}^{n-1})\, ;
\end{cases}
\end{equation}
}

in what follows we will denote by $Z_j$ the $j-$th $n-$dimensional column of $Z$ in equation \eqref{EQN:Back2Exp}. Note that the only non null entries of $Z$ will be $Z_{i,j}$, for $i,j=k,l$. Also, for the sake of simplicity, we will avoid to use the notation $X^{i;-(k,l)}$, $i=k,l$, only using $X^i$, $i=k,l$, instead.

Mimicking the same method earlier used, we
again guess the solution of the backward component $Y^i$ to be of the form
\begin{equation}\label{EQN:GuessBL}
-Y^i(t) = P^i(t) X^i(t) - \varphi^i(t) \, ,\quad i=k,l\, ,
\end{equation}
for $P^i$ and $\varphi^i$, $i=k,l$, a $\RR-$valued process.

Because of the particular form of equation \eqref{EQN:FBSDEL}, the $i-$th component of the BSDE $Y$ depends only on the $i-$th component of the forward SDE $X$, the matrix $P$ has null entry off the main diagonal, namely it has the form
\[
P(t) =  \left( \begin{array}{ccccc}
 0 & 0 & 0&0&0 \\
 0 & P^{k}(t) & 0 &0&0\\
 0 & 0 & 0 &0&0\\
 0 & 0 & 0 &P^l(t) &0 \\
 0 & 0 & 0 &0&0 \end{array} \right)\, ,
\]
similarly for $\varphi$.

Let us assume that $(P^i(t),Z^{i;P}(t))$, $i=k,l$ solves
\[
\begin{split}
-dP^i(t) &= F^{i;P}(t) dt - \sum_{j=k}^l Z^{P}_j(t) dW^j(t)\, ,\\
P^i(\hat{\tau}^{n-1}) &= \gamma^i  - P^{i}(\hat{\tau}^{n-1}) \Ind{\hat{\tau}^{n-1} = \tau^{-i}}\, ,
\end{split}
\]
and that $(\varphi^i(t),Z^{i;\varphi}(t))$ solves
\[
\begin{split}
-d\varphi^i(t) &= F^{i;\varphi}(t) dt - \sum_{j=k}^l Z_j^\varphi(t) dW^j(t)\, ,\\
\varphi(\hat{\tau}^{n-1}) &= - \gamma^i v^i(\hat{\tau}^{n-1}) + \varphi^i(\hat{\tau}^{n-1})\Ind{\hat{\tau}^{n-1} = \tau^{-i}}\, .
\end{split}
\]

From the first order condition we have  that the optimal control is of the form
\begin{equation}\label{EQN:OPtALbis}
\bar{\alpha}^i = -Y^i(t) = P^i(t) X^i(t) - \varphi^i(t) \, .
\end{equation}
Then, again applying the It\^{o} formula, we have
{\footnotesize
\begin{equation}\label{EQN:Anstaz22bis}
\begin{split}
&\left ( \mu^i(t) \varphi^i(t) - \mu^i(t) P^i(t)X^i(t) + \sigma^i(t) Z_{ii}(t) + \gamma^i X^i (t)- \gamma^i v^i(t)\right ) dt - \sum_{j=k}^l Z_{ij}(t) dW^j(t) =\\
&= -dY^i(t) = d(P^i(t) X^i(t)) - d \varphi^i(t) =\\
&= \left (- F^{i;P} (t) X^i(t) + P^i(t) \mu^i(t) X^i(t) +P^i(t)b^i(t) + P^i(t) \alpha^i(t) + \right )+F^{i;\varphi}(t) dt+\\
&+ \left (\sum_{j=l}^k\left ( Z^{i;P}_j (t) \rho^{ij} \sigma^i(t) X^i(t)+Z^{i;P}_j (t) \rho^{ij} \nu^i(t)\right )\right ) dt\\
&+ \sum_{j=k}^l Z^{i;P}_j (t) X^i(t) dW^j(t) + P^i(t)\left ( \sigma^i(t) X^i (t) + \nu^i(t)\right )dW^i(t) - \sum_{j=1}^2 Z^{i;\varphi}_j(t) dW^j(t) =\\
&=\left ( -F^{i;P} (t) +  P^i(t) \mu^i(t) + \sum_{j=k}^l Z^{i;P}_j (t) \rho^{ij} \sigma^i(t) \right )  X^i(t) dt +  P^i(t) \alpha^i(t) dt +\\
&+F^{i;\varphi}(t) dt+ \sum_{j=l}^kZ^{i;P}_j (t) \rho^{ij} \nu^i(t) dt +P^i(t)b^i(t) dt+\\
&+\left (Z^{i;P}_i (t) +P^i(t) \sigma^i(t)\right ) X^i(t) dW^i(t) + \sum_{\substack{j=k \\ j\not =i}}^l Z^{i;P}_j (t) X^i(t) dW^j(t)+ P^i(t)\nu^i(t) - \sum_{j=k}^l Z^{i;\varphi}_j(t) dW^j(t) \, .
\end{split}
\end{equation}
}

Thus, substituting equation \eqref{EQN:OPtALbis} into equation \eqref{EQN:Anstaz22bis}, and proceeding as for \eqref{EQN:Sys1LM2}, we have
\begin{equation}\label{EQN:Sys1LMA2bis}
F^{i;P} (t) = -\left (P^i(t)\right )^2  + \left (\sigma^i(t)\right )^2 P^i(t) + \sum_{j=k}^l Z^{i;P}_j (t) \ell^{ij} \sigma^i(t) - \gamma^i  \, ,
\end{equation}
with
\[
\ell^{ij} := 
\begin{cases}
\rho^{ij} & i \not = j\, ,\\
2 & i=j\, ,
\end{cases}
\]
together with
{\footnotesize
\[
F^{i;\varphi}(t) =(\mu^{i}-P^i(t)) \varphi(t)+ \sigma^i Z^{i;\varphi}_i(t) - \sum_{j=l}^kZ^{i;P}_j (t) \rho^{ij} \nu^i dt  -P^i(t)\nu^i(t) dt -\gamma^i v^i(t)-\sigma^i(t) \nu^i(t)P^i(t)\, .
\]
}
Turning back, for the sake of clarity, to use the extended notation dropped before, we have that $\bar{\alpha}^{i;-(k,l)}(t)$, $i=k,l$, is given by 
\[
\bar{\alpha}^{i;-(k,l)}(t) = P^{i;-(k,l)}(t) X^{i;-(k,l)}(t) + \varphi^{i;-(k,l)}(t)\, ,
\]
where $P^{i;-(k,l)}$ and $\varphi^{i;-(k,l)}$ are solutions, for $i=k,l$, to the BSDEs 

\begin{equation}\label{EQN:SolPVSecondLast}
\begin{cases}
-dP^{i;-(k,l)}(t) &= \left (-P^{i;-(k,l)}(t)^2  + (\sigma^{i;-(k,l)})^2 P^{i;-(k,l)}(t)\right )dt +\\
&+\left ( \sum_{j=k}^l Z^{-(k,l);P}_j (t) \ell^{ij} \sigma^{i;-(k,l)}(t) - \gamma^{i;-(k,l)}\right )dt - Z^{-(k,l);P}_{i,i}(t) dW^i(t)\, ,\\
P^{i;-(k,l)}(\hat{\tau}^{n-1}) &=\gamma^{i;-(k,l)} - P^{i,-i}(\hat{\tau}^{n-1}) \Ind{\hat{\tau}^{n-1} = \tau^{-i}} \, ,\\
\end{cases}
\end{equation}

\begin{equation}\label{EQN:SolPVSecondLast2}
\begin{cases}
-d\varphi^{i;-(k,l)}(t) &= \left ((\mu^{i;-(k,l)}(t)-P^{i;-(k,l)}(t)) \varphi^{i;-(k,l)}(t)+ \sigma^{i;-(k,l)}(t) Z^{-(k,l);\varphi}_{i,i}(t)\right )dt+\\
& -h^{i;-(k,l)}(P^{i;-(k,l)}(t), v^{i;-(k,l)}(t)) dt - Z^{-(k,l);\varphi}_{i,i}(t) dW^i(t)\, ,\\
\varphi(\hat{\tau}^{n-1}) &= - \gamma^{i;-(k,l)} v^{i;-(k,l)}(\hat{\tau}^{n-1}) + \varphi^{i,-i}(\hat{\tau}^{n-1})\Ind{\hat{\tau}^{n-1} = \tau^{-i}}\, ,
\end{cases}
\end{equation}

with
\[
\begin{split}
h^{i;-(k,l)}(P^{k;-(k,l)}(t),,v ^{k;-(k,l)}(t)) &=  \sum_{j=l}^kZ^{i;P}_j (t) \rho^{ij} \nu^{i;-(k,l)} dt  +P^{i;-(k,l)}(t)\nu^{i;-(k,l)}(t) dt +\\
&+\gamma^{i;-(k,l)} v^{i;-(k,l)}(t)+\sigma^i(t) \nu^{i;-(k,l)}(t)P^{i;-(k,l)}(t)\, .
\end{split}
\]
Let us underline that equations \eqref{EQN:SolPVSecondLast}--\eqref{EQN:SolPVSecondLast2} have to be solved for any possible couple $k=1,\dots,n$, $l=k+1,\dots,n$.
As before, by the linearity of the generator of $\varphi^i$ in equation \eqref{EQN:SolPVSecondLast2}, we have
\[
\begin{split}
\varphi^{i;-(k,l)}(t) &= -\left (\Gamma^i(t)\right )^{-1} \mathbb{E}_t \left [ \Gamma^i(\hat{\tau}^{n-1}) \left ( \varphi^{i,-i}(\hat{\tau}^{n-1})\Ind{\hat{\tau}^{n-1} = \tau^{-i}}\right )- \gamma^i v^i(\hat{\tau}^{n-1})\right ] + \\
&-\left (\Gamma^i(t)\right )^{-1} \mathbb{E}_t \left [ \int_t^{\hat{\tau}^{n-1}} \Gamma^i(s) h^{i;-(k,l)}(P^{i;-(k,l)}(s),v^{i;-(k,l)}(s)) ds \right ]\, ,
\end{split}
\]
where $\Gamma^i$ is the solution to
\[
d \Gamma^i(t) = \Gamma^i(t)\left [\mu^i(t) dt +\sigma^i(t) dW^i(t)\right ]\, ,\quad \Gamma^i(0) = 1\, .
\]
Hence, equation \eqref{EQN:SolPVSecondLast} admits a unique adapted solution on $[0,\hat{\tau}^{n-1}]$, see \cite[Th.5.2, Th. 5.3]{Tan}.

Analogously, via a backward induction, we can solve the first control problem, that is we solve, for $i = 1,\dots,n$,

{\footnotesize
\begin{equation}\label{EQN:FBSDEL0}
\begin{cases}
d X^{i;0} (t) = \left (\mu^{i;0}(t) X^{i;0}(t) + b^{i;0}(t)+ \alpha^{i;0}(t)\right ) dt + \left (\sigma^{i;0}(t) X^{i;0}(t) + \nu^{i;0}(t)\right )d W^i(t)\, , \\
X^{i;0} (0) = x^i_0\, ,\\
-dY^{i;0} (t) = \left ( \mu^{i;0}(t) Y^{i;0}(t) + \sigma^{i;0} Z_{i,i}^0(t) + \gamma^{i;0} X^{i;0}(t)- \gamma^{i;0} v^{i;0}(t)\right ) dt - \sum_{j=1}^n Z_{i,j}^0(t) dW^j(t)\, ,\\
Y^{i;0} (\hat{\tau}^{1}) = \gamma^{i;0} X^{i;0}(\hat{\tau}^{1})-\gamma^{i;0} v^{i;0}(\hat{\tau}^{1})+ Y^{i;1}(\hat{\tau}^{1})\Ind{\hat{\tau}^{1} \not= \tau^{i}}\, ,
\end{cases}
\end{equation}
}

resulting, exactly repeating what considered so far,
to consider an optimal control of the form
\[
\alpha^{i;0}(t) =  -Y^{i;0}(t) = P^{i;0}(t) X^{i;0}(t) - \varphi^{i;0}(t) \, ,
\]
and
\begin{equation}\label{EQN:SolPVLast0}
\begin{cases}
-dP^{i;0)}(t) &= \left (-(P^{i;0}(t))^2  + (\sigma^{i;0}(t))^2 P^{i;0}(t) + \sum_{j=1}^n Z^{0;P}_j (t) \ell^{ij} \sigma^{i;0}(t) - \gamma^{i;0}\right )dt +\\
&- Z^{0;P}_{i,i}(t) dW^i(t)\, ,\\
P^{i;0}(\hat{\tau}^{1}) &=\gamma^{i;0}- P^{i;1}(\hat{\tau}^{1})\Ind{\hat{\tau}^{1} \not= \tau^i} \, ,\\
\end{cases}
\end{equation}

\begin{equation}\label{EQN:SolPVLast02}
\begin{cases}
-d\varphi^{i;0}(t) &= \left ((\mu^{i;0)}-P^{i;0}(t)) \varphi^{i;0}(t)+ \sigma^{i;0}(t) Z^{0;\varphi}_{i,i}(t) - \gamma^{i;0} v^{i;0}(t)\right ) dt+\\
& - Z^{0;\varphi}_{i,i}(t) dW^i(t)\, ,\\
\varphi^{i;0}(\hat{\tau}^{1}) &= \varphi^{i;1}(\hat{\tau}^{1})\Ind{\hat{\tau}^{1} \not= \tau^i} - \gamma^{i;0}v^{i;0}(\hat{\tau}^{1})\, ,
\end{cases}
\end{equation}
with
\[
\begin{split}
h^{i;0}(P^{i;0}(t),v^{i;0}(t)) &=  \sum_{j=1}^n Z^{i;P}_j (t) \rho^{ij} \nu^{i;0} dt  +P^{i;0}(t)\nu^{i;0}(t) dt +\\
&+\gamma^{i;0} v^{i;0}(t)+\sigma^{i;0}(t) \nu^{i;0}(t)P^{i;0}(t)\, ,
\end{split}
\]
which concludes the proof.
\end{proof}

\section*{Acknowledgement}
The authors wish to thank Prof. Luciano Campi for his stimulating comments and enlightening suggestions. The authors also like to thank the group \textit{Gruppo Nazionale per l'Analisi Matematica, la Probabilità e le loro Applicazioni} (GNAMPA) for the financial help which has supported the present research within the project \textit{Stochastic Partial Differential Equations and Stochastic Optimal Transport with Applications to Mathematical Finance}.

\cleardoublepage

\end{document}